\documentclass[a4paper,9pt]{article}
\usepackage[a4paper,bindingoffset=0.2in,%
            left=1in,right=1in,top=1in,bottom=1in,%
            footskip=.25in]{geometry}
\usepackage{graphicx}
\usepackage{amsfonts}
\usepackage{amsmath}
\usepackage{url}
\usepackage{amsthm}
\usepackage{amssymb}
\newtheorem {Proposition}{Proposition}[section]
\newtheorem {Lemma}[Proposition] {Lemma}
\newtheorem {Theorem}[Proposition]{Theorem}
\newtheorem {Corollary}[Proposition]{Corollary}
\newtheorem {Remark}[Proposition]{Remark}


\def\N{\mathbb{N}}

\def\R{\mathbb{R}}
 
\usepackage{enumerate}
\usepackage{bbm}
\usepackage[utf8]{inputenc} 
\usepackage[T1]{fontenc}    
\usepackage{hyperref}       
\usepackage{url}            
\usepackage{booktabs}       
\usepackage{amsfonts}       

\usepackage{nicefrac}       
\usepackage{microtype}      
\usepackage{lipsum}
\usepackage{graphicx}
\graphicspath{ {./images/} }


\begin{document}
\title{Central Limit Theorems for Semidiscrete Wasserstein Distances}
\author{Eustasio del Barrio$^{(1)}$\footnote{Research partially supported by FEDER, Spanish Ministerio de Econom\'ia y Competitividad, grant MTM2017-86061-C2-1-P and Junta de Castilla y Le\'on, grants VA005P17 and VA002G18.}, Alberto Gonz\'alez-Sanz$^{(2)}$, and Jean-Michel Loubes $^{(3)}$\footnote{Research partially supported by the AI Interdisciplinary Institute ANITI, which is funded by the French “Investing for the Future – PIA3” program under the Grant agreement ANR-19-PI3A-0004.}\\  $\,$ \\ 
{ $^{(1)(2)}$IMUVA, Universidad de Valladolid, Spain} \\ 
$^{(2)(3)}$IMT, Universit\'e de Toulouse
France\\ $\,$ \\ 
$^{(1)}$tasio@eio.uva.es \quad $^{(2)}$alberto.gonzalez sanz@math.univ-toulouse.fr \\ $^{(3)}$loubes@math.univ-toulouse.fr}
\maketitle
\begin{abstract}
We prove a Central Limit Theorem for the empirical optimal transport cost,  $\sqrt{\frac{nm}{n+m}}\{\mathcal{T}_c(P_n,Q_m)-\mathcal{T}_c(P,Q)\}$, in the semi discrete case, i.e when the distribution $P$ is supported in $N$ points, but without assumptions on $Q$. We show that the asymptotic distribution is the supremun of a centered Gaussian process,  which is Gaussian under some additional conditions on the probability $Q$ and on the cost. Such results imply the central limit theorem for the $p$-Wassertein distance, for $p\geq 1$. 
This means that, for fixed $N$, the curse of dimensionality is avoided. To better understand the influence of such $N$, we provide bounds of $E|\mathcal{W}_1(P,Q_m)-\mathcal{W}_1(P,Q)|$ depending on  $m$  and $N$. 
Finally, the semidiscrete framework provides a control on the second derivative of the dual formulation,  which yields  the first central limit theorem for the optimal transport potentials. The results are supported by simulations that help to visualize the given limits and bounds. We analyse also the cases where classical bootstrap works.
\end{abstract}



\section{Introduction}

A large number of problems in statistics or computer science require the comparison between histograms or,  more generally, measures. Optimal transport has proven  to be an important tool to compare probability measures since it enables to define a metric over the set of distributions which convey their geometric properties., see \cite{verdinelli}. Moreover, together with the convergence of the moments, it metrizes the weak convergence,  see Chapter 7.1. in \cite{villani2003topics}.  It is nowadays used in a large variety of fields, in probability and  statistics. In particular in Machine learning, OT based methods have been developed to tackle problems in fairness as in \cite{ jiang2020wasserstein,pmlr-v97-gordaliza19a,Barrio2019ACL,delara2021transportbased}, in domain adaptation (\cite{Shen2018WassersteinDG}), or transfer learning (\cite{gayraud2017optimal}). Hence there is a growing need for theoretical results to support such applications and provide theoretical guarantees on the asymptotic distribution.\vskip .1in 

This work focuses on the semi-discrete optimal transport, i.e. when one of both probabilities is supported on a discrete set.  Such a problem is inspired by a large variety of applications, including  resource allocation problem, points versus demand distribution, positions of sites such that the mean allocation cost is minimal (\cite{Hartmann}), resolution of the incompressible Euler equation  using Lagrangian methods (\cite{Gallou}), non-imaging optics; matching between a point cloud and a triangulated surface; seismic imaging( \cite{MEYRON201913}), generation of blue noise distributions with applications for instance to low-level hardware implementation in printers( \cite{deGoes}), in astronomy (\cite{levy2020fast}). From a statistical point of view,  Goodness-of-fit-tests based on semi-discrete optimal transport enable to  detect deviations from a density map to have $P\neq Q$, by using the fluctuations of $\mathcal{W}(P_n,Q)$, see \cite{Hartmann} and to provide a new generalization of  distribution functions and quantile, proposed for instance by \cite{Hallin2020DistributionAQ}, when the probability is discrete.
\vskip .1in
The most general formulation of the optimal transport problem considers $\mathcal{X}, \mathcal{Y}$ both Polish spaces. We use the notation $\mathcal{P}(\mathcal{X})$ (resp. $\mathcal{P}(\mathcal{Y})$) for the set of Borel probability measures on $\mathcal{X}$ (resp. $\mathcal{Y}$). The optimal transport problem  between $P\in \mathcal{P}(\mathcal{X})$ and $Q\in \mathcal{P}(\mathcal{Y})$ for the cost $c:\mathcal{X} \times \mathcal{Y}\rightarrow [0,\infty)$ is formulated as the solution of 
\begin{align}\label{kant}
\mathcal{T}_c(P,Q):=\inf_{\gamma \in \Pi(P,Q)}\int_{\mathcal{X}\times \mathcal{Y}} c(\textbf{x},\textbf{y}) d \pi(\textbf{x}, \textbf{y}),
\end{align}
where $\Pi(P,Q)$ is the set of probability measures $\pi \in \mathcal{P}(\mathcal{X}\times \mathcal{Y})$ such that $\pi(A\times \mathcal{Y})=P(A)$ and $\pi(\mathcal{Y} \times B)=Q(B)$ for all $A,B$ measurable sets. \\ \\
If $c$ is continuous and there exist two continuous functions $a\in L^1(P)$ and $b\in L^1(Q)$ such that 
\begin{equation}\label{cost-cond}
    \text{for all $(\mathbf{x},\mathbf{y})\in \operatorname{supp}(P)\times \operatorname{supp}(Q)$,}\ \ c(\mathbf{x},\mathbf{y})\geq a(\mathbf{x})+b(\mathbf{y}),
\end{equation}
then the Kantorovich problem \eqref{kant} can be formulated in a dual form, as
\begin{align}\label{dual}
\mathcal{T}_c(P,Q)=\sup_{(f,g)\in \Phi_c(P,Q)}\int f(\textbf{x}) dP(\textbf{x})+\int g(\textbf{y}) dQ(\textbf{y}),
\end{align}
where $\Phi_c(P,Q)=\{ (f,g)\in L_1(P)\times L_1(Q): \ f(\textbf{x})+g(\textbf{y})\leq c(\textbf{x},\textbf{y}) \}$, see for instance Theorem 5.10 in \cite{villani2008optimal}. It is said that  $\psi\in L_1(P)$ is an \emph{optimal transport potential from $P$ to $Q$ for the cost $c$} if there exists $\varphi\in L_1(Q)$ such that the pair $(\psi, \varphi)$ solves \eqref{dual}. 
\vspace{5mm} 

We consider observations drawn from  two mutually independent samples $X_1, \dots, X_n$ and $Y_1, \dots, Y_m$  i.i.d. with laws $P$ and $Q$. Let  $P_n=\frac{1}{n}\sum_{k=1}^n\delta_{X_k}$ and $Q_m=\frac{1}{m}\sum_{k=1}^m\delta_{Y_k}$ be the corresponding empirical measures. The optimal transport cost between the empirical distributions $\mathcal{T}_c(P_n,Q_m)$ defines a random variable.
  The asymptotic distribution of the  empirical transport cost  $\mathcal{T}_c(P_n,Q_m)$ has been studied in some papers. In a very general case, \cite{delbarrio2021central, delbarrio2019,gonzalezdelgado2021twosample} prove, using the Efron-Stein's inequality, a Central Limit Theorem for the centered process,  i.e that  $\sqrt{\frac{nm}{n+m}}\{\mathcal{T}_c(P_n,Q_m)-E\mathcal{T}_c(P,Q)\}$, has a Gaussian asymptotic behavior. With similar arguments, \cite{Weed19} proves that result for the regularized optimal transport cost.  \\
  \indent Under  additional assumptions  it is possible to extend this result, in particular to the semi discrete framework. When $P$ is finitely supported but not $Q$, which  is supposed to be absolutely continuous with respect to the Lebesgue measure, with convex support, and when considering the quadratic cost,   \cite{delbarrio2019} proves that the limit  $\sqrt{n}\{\mathcal{T}_c(P_n,Q)-E\mathcal{T}_c(P,Q)\}$ is in fact Gaussian. Their approach is based on some differentiability properties of the optimal transport problem. But one of their main arguments is that the optimal transport potential is unique which is no longer true for a general costs and neither in general Polish spaces.  Similar results have been proved in the particular semi discrete case of $P,Q$ being supported in a finite (resp. countable) set. In particular, \cite{Sommerfeld2018} (resp. \cite{tameling2019}) prove that in this setting,  $\sqrt{\frac{nm}{n+m}}\{\mathcal{T}_c(P_n,Q_m)-\mathcal{T}_c(P,Q)\}$ has a weak limit $X$, which  is the supremun of a Gaussian process.  
Their proof relies on  the identification of the space of distributions supported in a finite set $\mathbb{X}=\{\mathbf{x}_1, \dots, \mathbf{x}_N\}$ with $\R^N$, and then on a proof based on the directional Hadamard differentiability of the functional $(\mathbf{p},\mathbf{q})\mapsto \mathcal{T}_c(\sum_{i=1}^N p_i\delta_{\mathbf{x}_i}, \sum_{i=1}^N{q_i}\delta_{\mathbf{x}_i})$.   The result of \cite{Sommerfeld2018} establishes that, if $P$ and $Q$ are both supported in a finite set, then
\begin{align*}
	\sqrt{n}\left(\mathcal{T}_c(P_n,Q)- \mathcal{T}_c(P,Q)\right)\overset{w}\longrightarrow \sup_{\mathbf{z}\in \operatorname{Opt}_c^0(P,Q)} \mathbb{G}(\mathbf{z}), \ \ \text{where} \ \ \mathbb{G}(\mathbf{z}):=\sum_{i=1}^N z_i  X_i
\end{align*}
and $(X_1,\dots,X_n)$ is a centered Gaussian vector and $\operatorname{Opt}_c^0(P,Q)$  is the set of solutions of the dual problem \eqref{dual}, both described in section~2.  Lately \cite{tameling2019} extended the same result for probabilities supported in countable spaces.  Yet this approach does not hold for probabilities non supported on finite or countable sets.    \vskip .1in
  In this work we are concerned with the asymptotic behaviour of $\sqrt{\frac{nm}{n+m}}\{\mathcal{T}_c(P_n,Q_m)-\mathcal{T}_c(P,Q)\}$ for general semi discrete setting. We propose a new proof that, even if it still uses the Hadamard differentiability, consider the framework introduced in \cite{Luis2020}. It consists in considering the  Hadamard derivative of the supremun of the process with respect to  $\ell^{\infty}$ topology. The relationship with CLT for optimal transport cost comes from the fact that  the dual formulation of the transport problem is, in fact, a supremum of functions. Hence if such functions lives in a Donsker class (see ~\cite{Vart_Well}), then we can obtain the central limit by proving the   differentiability  of the supremum in $\ell^{\infty}(\mathcal{B})$ and then applying the general delta-method. \vskip .1in
Hence this work first covers and generalizes  previously mentioned results of \cite{Sommerfeld2018} for a semi discrite $P$ approximated by $P_n$ and a general probability distribution $Q$ to handle all cases of  the semi discrete framework.  Moreover, the computation of $\mathcal{T}_c(P_n,Q)$ is not easy in general, see for instance \cite{Gallou}. Consequently, an interesting problem, also for applications, becomes its approximation by a $Q_m$, an estimation of $Q$. Hence we also provide  the asymptotic behaviour of $\sqrt{m}\{\mathcal{T}_c(P,Q_m)-\mathcal{T}_c(P,Q)\}.$  Surprisingly, Theorem~ \ref{Teoremaprinci}  yields that it tends to
$$ \sup_{\mathbf{z}\in \operatorname{Opt}_c(P,Q)} \mathbb{G}_Q(\inf_{i=1, \dots, N} \{ c(\mathbf{x}_i, \mathbf{y} ) -z_i \}),$$
where $\operatorname{Opt}_c(P,Q)$ is the set of optimal transport potentials and $\mathbb{G}_Q$ is the Brownian bridge in $\mathcal{F}_c^K$ (both will be defined more precisely later) with mean zero and covariance 
$$(f,g)\mapsto \int f(\mathbf{y})g(\mathbf{y})dQ(\mathbf{y}) -\int f(\mathbf{y})dQ(\mathbf{y})\int g(\mathbf{y})dQ(\mathbf{y}).$$
Finally we provide in Section~\ref{two_samp} a unified general result that describes  the asymptotic distribution of the empirical  transport cost between a probability $P$ supported in the finite set  $\mathbb{X}=\{ \mathbf{x}_1, \dots, \mathbf{x}_N \}\subset \mathcal{X}$ and  $Q\in \mathcal{P}(\mathcal{Y})$ under the minimal assumption
\begin{align*}
\int c(\mathbf{x}_i, \mathbf{y})dQ(\mathbf{y})<\infty, \ \text{ for all $i=1,\dots, N$},
\end{align*}
for all cases 
\begin{itemize}
    \item \textbf{$P_n \rightarrow P $}
    \begin{align*}
	\sqrt{n}\left(\mathcal{T}_c(P_n,Q)- \mathcal{T}_c(P,Q)\right)\xrightarrow{w} \sup_{\mathbf{z}\in \operatorname{Opt}_c(P,Q)} \mathbb{G}(\mathbf{z}).
\end{align*}
 \item \textbf{$Q_m \rightarrow Q $}, suppose that
\begin{align*}
\int c(\mathbf{x}_i, \mathbf{y})^2dQ(\mathbf{y})<\infty, \ \text{ for all $i=1,\dots,N$},
\end{align*} 
then
$$\sqrt{m}\left(\mathcal{T}_c(P,Q_m)- \mathcal{T}_c(P,Q)\right)\stackrel{w}\longrightarrow \sup_{\mathbf{z}\in \operatorname{Opt}_c(P,Q)} \mathbb{G}_Q(\inf_{i=1, \dots, N} \{ c(\mathbf{x}_i, \mathbf{y} ) -z_i \}).$$
\item  \textbf{Two sample case $P_n,Q_m$ }
Suppose \eqref{cuadratic} and that $\frac{m}{n+m}\rightarrow \lambda\in (0,1)$,  then
$$\sqrt{\frac{nm}{n+m}}\left(\mathcal{T}_c(P_n,Q_N)- \mathcal{T}_c(P,Q)\right)\stackrel{w}\longrightarrow \sup_{\mathbf{z}\in \operatorname{Opt}_c(P,Q)} \left(\sqrt{\lambda}\mathbb{G}(\mathbf{z})+\sqrt{\lambda}\mathbb{G}_Q(\inf_{i=1, \dots, N} \{ c(\mathbf{x}_i, \mathbf{y} ) -z_i \})\right).$$

\end{itemize}
The fact that the curse of dimensionality seems to not affect the semi discrete case for both probabilities is quite astonishing. But it is partially hidden in the assumption that the set $\mathbb{X}$ has a fixed size. For a better understanding we provide in Theorem~ \ref{bound_exp} for the particular case of $\mathcal{W}_1$, a bound which studies  the effect of the choice of a discretization with size the one of the set $\mathbb{X}$. It highlights a natural trade-off between the discretization scheme of the distribution and the sampling of the distribution. \\
\indent Moreover, on the cases where $P,Q\in \mathcal{P}(\R^d)$ be such that $Q\ll\ell_d$ and its support is connected with Lebesgue negligible boundary, if the cost $c$ satisfies (A1)-(A3), all  previous limits can be made more explicit and the supremum in previous limits can be computed. Such results are given in Section 3 for such cases where the transport potential is unique up to additive constants.  In this case, under some assumption of regularity on the cost and on $Q$,  the limit is not a supremun anymore, but simply a centered Gaussian random variable. \\
\indent Finally the last section studies the semidiscrete O.T. in manifolds and gives, up to our knowledge, the first Central Limit Theorem for the solutions of the dual problem \eqref{dual}. We underline this result can not be generalised for continuous distributions. Indeed, if both probabilities are continuous and the space is not one dimensional, we cannot expect such type of central limit for the potentials, since,  the expected value of the estimation of the transport cost converges with rate $O(n^{-\frac{1}{d}})$ and no longer $O(n^{-\frac{1}{2}})$. When the two samples are discrete, even if such a rate is $O(n^{-\frac{1}{2}})$, the lack of uniqueness of the dual problem does not allow to prove such type of problems. In consequence, the semidiscrete is the unique case where such results, for the potentials of the O.T. problem in general dimension, can be expected.

\section{Central Limit Theorems for semidiscrete distributions}\label{two_samp}
\subsection{Semidiscrete optimal transport reframed as optimization program}
Consider general Polish spaces $\mathcal{X}, \mathcal{Y}$ and let $\mathcal{P}(\mathcal{Y})$ be the set of distributions on $\mathcal{Y}$. Consider also a generic finite set,  $\mathbb{X}=\{\mathbf{x}_1, \dots, \mathbf{x}_N \}\subset \mathcal{X}$ be such that $\mathbf{x}_i\neq \mathbf{x}_j$, for $i\neq j$. In all this work, we consider  $\mathcal{P}(\mathbb{X})$ the set of probabilities supported  in this finite  set. So   any $P\in \mathcal{P}(\mathbb{X})$ can be written as
\begin{equation}\label{represen}
   \text{$P:=\sum_{k=1}^Np_k\delta_{\mathbf{x}_k}$, where  $p_i>0$, for all $i=1, \dots,N$, and $\sum_{k=1}^Np_k=1$.
 } 
\end{equation}
In consequence $P$ is characterized by the vector $\mathbf{p}=(p_1, \dots, p_N)\in \R^N$.\\

We focus on semi-discret optimal transport cost which is defined as the optimal transport between a finite probability $P  \in \mathcal{P}(\mathbb{X})$ and any probability $Q \in \mathcal{P}(\mathcal{Y})$.\vskip .1in
 The following result shows that  the  optimal transport problem in the semi-discrete case  is equivalent to an optimization problem over a finite dimensional parameter space.
 Define the following function $g_c$, which   depends on  $P$ and $Q$  as 
\begin{align}
\begin{split}
    \label{defg}
    g_c(P,Q, \cdot):\R^{N}&\rightarrow \R\\
    \mathbf{z}&\mapsto g_c(P,Q, \mathbf{z})=  \sum_{i=1}^N z_ip_i+\int \min_{i=1, \dots, N} \{ c(\mathbf{x}_i, \mathbf{y} ) -z_i \}dQ(\mathbf{y}).
\end{split}
\end{align}

\begin{Lemma}\label{Lemma:dualsemi}
Let $P\in \mathcal{P}(\mathbb{X})$ , $Q\in \mathcal{P}(\mathcal{Y})$ and $c$ be a non-negative cost, then the optimal transport between $P$ and $Q$ for the cost $c$, $\mathcal{T}_c(P,Q)$, satisfies
\begin{align}\label{discrete_dual}
\mathcal{T}_c(P,Q)=\sup_{\mathbf{z}\in \R^N, \:|\mathbf{z}|\leq K^*} g_c(P,Q, \mathbf{z}). 
\end{align}
for $K^*= \frac{1}{\inf_i p_i}\left( \sup_{i=1, \dots, N}\int  c(\mathbf{y},\mathbf{x}_i) dQ(\mathbf{y}) \right)$. Moreover we can assume that $z_1=0$.
\end{Lemma}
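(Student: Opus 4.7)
The plan is to derive \eqref{discrete_dual} from the Kantorovich duality \eqref{dual} by exploiting that any potential on the finite support $\mathbb{X}$ reduces to a vector $\mathbf{z} \in \R^N$, and then to establish the range restriction $|\mathbf{z}| \leq K^*$ via a coercivity argument combined with translation invariance.

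First I would invoke \eqref{dual}: since $c \geq 0$, condition \eqref{cost-cond} holds with $a = b = 0$, so strong duality is available. Any admissible first potential $f$ is parameterized by $z_i := f(\mathbf{x}_i)$ on $\mathbb{X}$ (and may be taken as $-\infty$ off $\mathbb{X}$, since $f$ enters only through pointwise constraints). For a fixed $\mathbf{z}$, admissibility forces $g(\mathbf{y}) \leq \min_i\{c(\mathbf{x}_i, \mathbf{y}) - z_i\}$, and taking equality (the $c$-transform of $f$) maximizes the dual integral. The resulting $g$ lies in $L^1(Q)$, being bounded below by $-\max_i z_i$ and above by the integrable function $c(\mathbf{x}_1, \cdot) - z_1$. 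Substituting yields $\int f \, dP + \int g \, dQ = g_c(P, Q, \mathbf{z})$, so $\mathcal{T}_c(P,Q) = \sup_{\mathbf{z} \in \R^N} g_c(P, Q, \mathbf{z})$.

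Next, the map $\mathbf{z} \mapsto \mathbf{z} + t\mathbf{1}$ leaves $g_c$ invariant, because $\sum_i p_i = 1$ makes the constant $t$ cancel between the linear term and the integral. This invariance justifies the normalization $z_1 = 0$. To obtain $|\mathbf{z}| \leq K^*$ I would first re-normalize instead by $\max_i z_i = 0$, so every $z_i \leq 0$ and $z_{i_0} = 0$ for some $i_0$. Then $\min_i\{c(\mathbf{x}_i, \mathbf{y}) - z_i\} \leq c(\mathbf{x}_{i_0}, \mathbf{y})$, hence $g_c(\mathbf{z}) \leq \sum_k p_k z_k + \int c(\mathbf{x}_{i_0}, \mathbf{y}) \, dQ(\mathbf{y})$. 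Combining with $g_c(\mathbf{z}) \geq g_c(\mathbf{0}) \geq 0$ gives $\sum_k p_k z_k \geq -\sup_i \int c(\mathbf{x}_i, \mathbf{y}) \, dQ(\mathbf{y})$; since all $z_k \leq 0$ each summand is non-positive, so $p_j z_j \geq \sum_k p_k z_k$ for every $j$, and dividing by $p_j \geq \inf_i p_i$ yields $|z_j| \leq K^*$. Shifting back to the normalization $z_1 = 0$ preserves this $\ell^\infty$ bound.

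The main obstacle I expect is this coercivity step: concavity plus translation invariance of $g_c$ makes the existence of a bounded near-maximizer a routine consequence, but extracting the explicit constant $K^* = (\inf_i p_i)^{-1} \sup_i \int c(\mathbf{x}_i, \mathbf{y}) \, dQ(\mathbf{y})$ requires the particular sign-based normalization above, after which the lower bound on the weighted sum $\sum_k p_k z_k$ transfers cleanly to individual coordinates.
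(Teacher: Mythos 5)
Your proof is correct and follows essentially the same route as the paper: Kantorovich duality plus the $c$-transform to reduce to $\sup_{\mathbf{z}\in\R^N} g_c(P,Q,\mathbf{z})$, then a coercivity bound obtained by dropping the infimum at the argmax index and using non-negativity of the cost, with the weights $p_i$ converting the weighted bound into a coordinatewise one. The only (cosmetic) difference is that you normalize $\max_i z_i=0$ and bound each coordinate, whereas the paper bounds the oscillation $\sup_{i,j}|z_i^*-z_j^*|$ directly before normalizing $z_1=0$; both yield the same $\ell^\infty$ control by $K^*$.
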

\begin{Remark}\label{discrete_dual_remark}
Consider the dual expression of $\mathcal{T}_c(P,Q)$ and let $\varphi$ denote an optimal transport potential from $P$ to $Q$ for the cost $c$, then
$$\mathcal{T}_c(P,Q)=g_c(P,Q,\varphi(\mathbf{x}_1),\dots, \varphi(\mathbf{x}_N))).$$
Hence the optimal transport potentials and optimal values of \eqref{discrete_dual} are linked through the expression $\mathbf{z}=(\varphi(\mathbf{x}_1),\dots, \varphi(\mathbf{x}_N))$.
\end{Remark}

Note that  $g_c(P,Q, \cdot)$ is a continuous function, which can be deduced from the following lemma. And, therefore, the supremun in \eqref{discrete_dual} is attained and the
 the class of optimal values
\begin{equation}\label{opt1}
    \operatorname{Opt}_c(P,Q):=\left\lbrace  \mathbf{z}\in \R^d: \ \mathcal{T}_c(P,Q)= g_c(P,Q, \mathbf{z})\right\rbrace
\end{equation}
and its restriction 
\begin{equation}\label{opt}
    \operatorname{Opt}_c^0(P,Q):=\left\lbrace  \mathbf{z}\in \R^d: \ \mathcal{T}_c(P,Q)= g_c(P,Q, \mathbf{z}), \ \ z_1=0\right\rbrace.
\end{equation}
are both non-empty.
\begin{Lemma}\label{lemma8lipscit}
If $ f(\mathbf{y})=\inf_{i=1, \dots, N} \{ c(\mathbf{x}_i, \mathbf{y} ) -z_i \}$ and $ g(\mathbf{y})=\inf_{i=1, \dots, N} \{ c(\mathbf{x}_i, \mathbf{y} ) -s_i \}$, then
\begin{align}\label{boundLips}
    |f(\mathbf{y})-g(\mathbf{y})|\leq \sup_{i=1, \dots, N} \{|z_i-s_i| \}\leq | \mathbf{z}-\mathbf{s}|.
\end{align} 
\end{Lemma}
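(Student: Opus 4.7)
The plan is to exploit the classical stability of the infimum under uniform perturbation of the family. Fix an arbitrary $\mathbf{y}\in\mathcal{Y}$ and set $M:=\sup_{i=1,\dots,N}|z_i-s_i|$; this is a finite quantity since the family is finite. Then for each index $i$ we have the trivial bound
\begin{equation*}
    c(\mathbf{x}_i,\mathbf{y})-z_i \;\leq\; c(\mathbf{x}_i,\mathbf{y})-s_i + M.
\end{equation*}
Taking the infimum over $i$ of the left-hand side while leaving the right-hand side indexed by $i$ yields $f(\mathbf{y}) \leq c(\mathbf{x}_i,\mathbf{y}) - s_i + M$ for every $i$; now taking the infimum on the right gives $f(\mathbf{y})\leq g(\mathbf{y})+M$. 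By symmetry (swap the roles of $\mathbf{z}$ and $\mathbf{s}$) we obtain $g(\mathbf{y})\leq f(\mathbf{y})+M$, hence $|f(\mathbf{y})-g(\mathbf{y})|\leq M$, which is the first inequality in \eqref{boundLips}.

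For the second inequality, the symbol $|\cdot|$ denotes the Euclidean norm on $\R^{N}$, and the standard comparison between the $\ell^{\infty}$ and $\ell^{2}$ norms gives $\sup_{i}|z_i-s_i|\leq\bigl(\sum_{i=1}^N |z_i-s_i|^2\bigr)^{1/2}=|\mathbf{z}-\mathbf{s}|$.

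There is no real obstacle here: the whole statement is a manifestation of the fact that the map $(a_1,\dots,a_N)\mapsto\min_i a_i$ is $1$-Lipschitz for the $\ell^{\infty}$ norm on $\R^N$, applied pointwise in $\mathbf{y}$ to the coordinate vector $(c(\mathbf{x}_1,\mathbf{y}),\dots,c(\mathbf{x}_N,\mathbf{y}))$ shifted by $-\mathbf{z}$ versus $-\mathbf{s}$. The only mild point is to note that the bound is uniform in $\mathbf{y}$, which is precisely what makes this lemma useful later: together with boundedness of the feasible set $\{|\mathbf{z}|\leq K^{*}\}$ from Lemma~\ref{Lemma:dualsemi}, it will imply continuity (indeed Lipschitz continuity) of $\mathbf{z}\mapsto g_c(P,Q,\mathbf{z})$, so that the supremum in \eqref{discrete_dual} is attained and the sets $\operatorname{Opt}_c(P,Q)$ and $\operatorname{Opt}_c^0(P,Q)$ are non-empty.
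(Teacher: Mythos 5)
Your proof is correct, and it is the standard argument: the map $(a_1,\dots,a_N)\mapsto\min_i a_i$ is $1$-Lipschitz with respect to the $\ell^\infty$ norm, applied pointwise in $\mathbf{y}$; the second inequality is just $\|\cdot\|_\infty\leq\|\cdot\|_2$ on $\R^N$. The paper states the lemma without proof, evidently treating it as elementary, so there is no paper argument to diverge from; your write-up fills the gap exactly as one would expect, and your closing remark about why the uniform-in-$\mathbf{y}$ bound matters (Lipschitz continuity of $g_c(P,Q,\cdot)$, hence attainment of the supremum in \eqref{discrete_dual}) matches the way the paper actually uses the lemma.
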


\subsection{Main results : Central Limit Theorems for semi-discrete optimal transport cost}
Our aim is to study the empirical semi-discrete optimal transport cost. Let $X_1, \dots, X_n$ and $Y_1, \dots, Y_m$ be two independent sequences of i.i.d. random variables with laws $P$ and $Q$ respectively, since $X_k\in \mathbb{X}$ for all $k=1,\dots,n$, the empirical measure $P_n:=\frac{1}{n}\sum_{k=1}^n\delta_{X_k}$ belongs also to $\mathcal{P}(\mathbb{X})$. In consequence it can be written as $P_n:=\sum_{k=1}^Np_k^n\delta_{\mathbf{x}_k}$, where $p_1^n, \dots, p_N^n$ are real random variables such that $p_i^n\geq 0$, for all $i=1, \dots,N$, and $\sum_{k=1}^np_k^n=1$.
 We want to study the weak limit of the following sequences corresponding to all possible  asymptotics 
$$
	\left\lbrace\sqrt{n}\left(\mathcal{T}_c(P_n,Q)- \mathcal{T}_c(P,Q)\right)\right\rbrace_{n\in \N}, \ \ 	\left\lbrace\sqrt{n}\left(\mathcal{T}_c(P,Q_m)- \mathcal{T}_c(P,Q)\right)\right\rbrace_{m\in \N},$$ 
and the	two sample case
	$$
	\left\lbrace\sqrt{\frac{nm}{n+m}}\left(\mathcal{T}_c(P_n,Q_m)- \mathcal{T}_c(P,Q)\right)\right\rbrace_{m,n\in \N},
$$
under the assumption $\frac{m}{n+m}\rightarrow \lambda \in (0,1)$.   \\

To state the asymptotic behaviour we introduce  first  a centered Gaussian vector,  $(X_1,\dots, X_N)$ with covariance matrix
\begin{equation}\label{sigma_Natrix}
   \Sigma(\mathbf{p}):= \begin{bmatrix}
p_1(1-p_1) & -p_1p_2  & \cdots& -p_1p_N\\
-p_2p_1 & p_2(1-p_2) & \cdots&-p_2p_N\\
\vdots & \vdots & \ddots&\vdots\\
-p_Np_1 &  \cdots&p_Np_{N-1}&p_N(1-p_N)
\end{bmatrix}.
\end{equation}
We also define   a centered Gaussian processes $\mathbb{G}^c_Q$  in $\R^N$ with covariance function
\begin{align}
    \begin{split}
        \label{xi_Natrix}
\Xi^c_Q(\mathbf{z},\mathbf{s}):=& \int \inf_{i=1, \dots, N} \{ c(\mathbf{x}_i, \mathbf{y} )-z_i\}\inf_{i=1, \dots, N} \{ c(\mathbf{x}_i, \mathbf{y} )-s_i\} dQ(\mathbf{y})\\
&-\int \inf_{i=1, \dots, N} \{ c(\mathbf{x}_i, \mathbf{y} )-z_i\}dQ(\mathbf{y})\int\inf_{i=1, \dots, N} \{ c(\mathbf{x}_i, \mathbf{y} )-s_i\} dQ(\mathbf{y}).
    \end{split}
\end{align}
We can now state our main theorem.
\begin{Theorem}\label{Teoremaprinci}
Let $P\in \mathcal{P}(\mathbb{X})$, $Q\in \mathcal{P}(\mathcal{Y})$, $c$ be non-negative and
\begin{align}\label{cond}
\int c(\mathbf{x}_i, \mathbf{y})dQ(\mathbf{y})<\infty, \ \text{ for all $i=1,\dots, m$},
\end{align} 
then the following limits hold.
\begin{itemize}
    \item \textbf{(One sample case for empirical discrete distribution $P_n$)}
    \begin{align*}
	\sqrt{n}\left(\mathcal{T}_c(P_n,Q)- \mathcal{T}_c(P,Q)\right)\xrightarrow{w} \sup_{\mathbf{z}\in \operatorname{Opt}_c^0(P,Q)} \sum_{i=1}^N z_i  X_i.
\end{align*}
\end{itemize}
Suppose that
\begin{align}\label{cuadratic}
\int c(\mathbf{x}_i, \mathbf{y})^2dQ(\mathbf{y})<\infty, \ \text{ for all $i=1,\dots,N$}.
\end{align} 
\begin{itemize}
 \item \textbf{(0ne sample case for empirical distribution $Q_m$)}
$$\sqrt{m}\left(\mathcal{T}_c(P,Q_m)- \mathcal{T}_c(P,Q)\right)\stackrel{w}\longrightarrow \sup_{\mathbf{z}\in \operatorname{Opt}_c^0(P,Q)} \ \mathbb{G}^c_Q(\mathbf{z}).$$
\item  \textbf{(Two sample case )}
if $n,m\rightarrow\infty$, with $\frac{m}{n+m}\rightarrow \lambda\in  (0,1)$,  then
$$\sqrt{\frac{nm}{n+m}}\left(\mathcal{T}_c(P_n,Q_m)- \mathcal{T}_c(P,Q)\right)\stackrel{w}\longrightarrow \sup_{\mathbf{z}\in \operatorname{Opt}_c^0(P,Q)} \left(\sqrt{\lambda} \sum_{i=1}^N z_i  X_i+(\sqrt{1-\lambda})\mathbb{G}^c_Q(\mathbf{z})\right).$$
\end{itemize}
Here $(X_1,\dots,X_N)\sim \mathcal{N}(\mathbf{0}),$ for $\Sigma(\mathbf{p}) $ defined in \eqref{sigma_Natrix},  and  $\mathbb{G}^c_Q$ is a centered Gaussian process with covariance function $\Xi^c(Q)$ defined in \eqref{xi_Natrix}. Moreover $\mathbb{G}^c_Q$ and $(X_1,\dots,X_N)$ are independent.

\end{Theorem}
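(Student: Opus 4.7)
The plan is to reduce all three statements to a functional delta method applied to the supremum functional, by viewing $\mathcal{T}_c(P,Q)$ via Lemma~\ref{Lemma:dualsemi} as $\sup_{\mathbf{z}\in K} g_c(P,Q,\mathbf{z})$, where $K=\{\mathbf{z}\in\R^N:|\mathbf{z}|\leq K^*,\,z_1=0\}$ is compact. First I would fix a (possibly enlarged) compact set $K$ that contains, with probability tending to one, all the random constants $K_n^*,K_m^*$ produced by plugging $P_n$ into Lemma~\ref{Lemma:dualsemi}; the strong law $p_i^n\to p_i>0$ combined with $\int c(\mathbf{x}_i,\mathbf{y})dQ(\mathbf{y})<\infty$ makes this step routine, so asymptotically the empirical optimal transport cost is also a supremum over the same fixed $K$.

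Next I would study the fluctuations of $\mathbf{z}\mapsto g_c(\cdot,\cdot,\mathbf{z})$ as a random element of $\ell^\infty(K)$. In the $P_n$ case the difference $g_c(P_n,Q,\mathbf{z})-g_c(P,Q,\mathbf{z})=\sum_{i=1}^N z_i(p_i^n-p_i)$ is linear in $\mathbf{z}$, so the multivariate CLT gives $\sqrt{n}(g_c(P_n,Q,\cdot)-g_c(P,Q,\cdot))\rightsquigarrow \mathbf{z}\mapsto\sum_i z_iX_i$ in $C(K)$, with $(X_1,\dots,X_N)\sim\mathcal{N}(\mathbf{0},\Sigma(\mathbf{p}))$. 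For the $Q_m$ case I would apply the empirical process CLT to the class
\begin{equation*}
\mathcal{F}_c^K=\bigl\{\mathbf{y}\mapsto f_{\mathbf{z}}(\mathbf{y})=\min_{i=1,\dots,N}(c(\mathbf{x}_i,\mathbf{y})-z_i):\mathbf{z}\in K\bigr\}.
\end{equation*}
By Lemma~\ref{lemma8lipscit} the map $\mathbf{z}\mapsto f_{\mathbf{z}}$ is $L^\infty$-Lipschitz into $L^2(Q)$, so a parametric bracketing argument (covering numbers of $K\subset\R^N$ are polynomial) together with the envelope $|f_{\mathbf{z}}(\mathbf{y})|\leq\max_i c(\mathbf{x}_i,\mathbf{y})+K^*$, which lies in $L^2(Q)$ under \eqref{cuadratic}, shows that $\mathcal{F}_c^K$ is $Q$-Donsker. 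Hence $\sqrt{m}(g_c(P,Q_m,\cdot)-g_c(P,Q,\cdot))\rightsquigarrow\mathbb{G}_Q^c$ in $\ell^\infty(K)$.

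The third step is the delta method for the supremum functional $S:\ell^\infty(K)\to\R$, $S(h)=\sup_{\mathbf{z}\in K}h(\mathbf{z})$, which, following \cite{Luis2020}, is Hadamard directionally differentiable at $g_c(P,Q,\cdot)$ tangentially to $C(K)$, with derivative $S'_{g_c(P,Q,\cdot)}(h)=\sup_{\mathbf{z}\in\operatorname{Opt}_c^0(P,Q)}h(\mathbf{z})$; the arg-max set here is exactly \eqref{opt} and is compact by continuity of $g_c(P,Q,\cdot)$ on $K$. Combining with the weak convergences above yields the first two items of the theorem. For the two-sample case I would work on the product probability space: by independence of the $X$- and $Y$-samples, the joint weak convergence
\begin{equation*}
\sqrt{\tfrac{nm}{n+m}}\bigl(g_c(P_n,Q_m,\cdot)-g_c(P,Q,\cdot)\bigr)\rightsquigarrow \mathbf{z}\mapsto\sqrt{\lambda}\sum_{i=1}^N z_iX_i+\sqrt{1-\lambda}\,\mathbb{G}_Q^c(\mathbf{z})
\end{equation*}
holds in $\ell^\infty(K)$ with the two Gaussian pieces independent, and another application of the delta method concludes.

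I expect the main obstacle to be the Donsker property of $\mathcal{F}_c^K$ under only the moment assumption \eqref{cuadratic} (the cost is in general unbounded), because the standard Lipschitz-in-parameter Donsker theorem needs a square-integrable envelope, which is exactly what \eqref{cuadratic} provides, so the argument must be done carefully to control the envelope via the minimum over $i$. A secondary technical point is to upgrade from the random feasibility radius $K_n^*$ of Lemma~\ref{Lemma:dualsemi} to a deterministic $K$ uniformly in $n$, which requires a preliminary tightness/consistency argument so that the delta method can be applied on a fixed compact parameter set.
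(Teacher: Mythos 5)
Your proposal follows essentially the same route as the paper: reduction to a supremum over a fixed compact ball via Lemma~\ref{Lemma:dualsemi}, a bracketing/Donsker argument for $\mathcal{F}_c^K$ with the square-integrable envelope supplied by \eqref{cuadratic}, Hadamard directional differentiability of the supremum functional from \cite{Luis2020} combined with the delta method, and a localization step controlling the random radius $K_{n,m}$ (your ``probability tending to one'' version of this step is a slightly cleaner variant of the paper's explicit rate bound). The only point you leave implicit, which the paper verifies explicitly and which is needed for the tangential delta method, is that the limit process has a version with continuous sample paths on the ball; this follows routinely from Lemma~\ref{lemma8lipscit} and the uniform continuity of the Brownian bridge with respect to the intrinsic semimetric.
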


When $\mathcal{X}$ and $\mathcal{Y}$ are contained in the same Polish space $(\mathcal{Z},d)$, a particular cost that satisfies the assumptions of Theorem \ref{Teoremaprinci} is the metric $d^p$ for all $p\geq 1$. Then applying Theorem~\ref{Teoremaprinci} to the empirical estimations of ${\mathcal{T}_{d^p}(P,Q)}$  and a delta-method, enable to prove the asymptotic behaviour of the $p$-Wasserstsein distance  $\mathcal{W}_p^p(P,Q)={\mathcal{T}_{d^p}(P,Q)}$ as given in the following corollary.
\begin{Corollary}\label{coro:semi_disc}
Let $P\in \mathcal{P}(\mathbb{X})$ and $Q\in \mathcal{P}(\mathcal{Z}) $ be such that 
\begin{align}\label{assumptio:1_disc}
\int d( \mathbf{x}_0, \mathbf{y})^pdQ(\mathbf{y})<\infty, \ \text{ for some $\mathbf{x}_0\in \mathbb{X}$}.
\end{align}
 Then, for any $p\geq 1$, we have
\begin{itemize}
    \item \textbf{(One sample case for $P$)}
 \begin{itemize}
     \item if $\mathcal{W}_p(P,Q)\neq 0$, 
     \begin{align*}
	\sqrt{n}\left(\mathcal{W}_p(P_n,Q)- \mathcal{W}_p(P,Q)\right)\overset{w}\longrightarrow \frac{1}{p\left(\mathcal{W}_p(P,Q)\right)^{p-1}}\sup_{\mathbf{z}\in \operatorname{Opt}_{d^p}(P,Q)} \sum_{i=1}^N z_i  X_i,
\end{align*}

     \item if $\mathcal{W}_p(P,Q)= 0$, 
          \begin{align*}
	n^{\frac{1}{2p}}\mathcal{W}_p(P_n,Q)\overset{w}\longrightarrow\left(\sup_{\mathbf{z}\in \operatorname{Opt}_{d^p}(P,P)} \sum_{i=1}^N z_i  X_i\right)^{\frac{1}{p}}.
\end{align*}
 \end{itemize}
\end{itemize}
Suppose that
\begin{align}\label{assumptio:1_disc2}
\int d( \mathbf{x}_0, \mathbf{y})^{2p}dQ(\mathbf{y})<\infty, \ \text{ for some $\mathbf{x}_0\in \mathbb{X}$},
\end{align}
then
\begin{itemize}
 \item \textbf{(0ne sample case for $Q$)}
  \begin{itemize}
     \item if $\mathcal{W}_p(P,Q)\neq 0$, 
     \begin{align*}
	\sqrt{m}\left(\mathcal{W}_p(P,Q_m)- \mathcal{W}_p(P,Q)\right)\overset{w}\longrightarrow \frac{1}{p\left(\mathcal{W}_p(P,Q)\right)^{p-1}} \sup_{\mathbf{z}\in \operatorname{Opt}_c^0(P,Q)}\mathbb{G}^{d^p}_Q(\mathbf{z}),
\end{align*}
     \item if $\mathcal{W}_p(P,Q)= 0$, 
          \begin{align*}
	m^{\frac{1}{2p}}\mathcal{W}_p(P,Q_m)\overset{w}\longrightarrow\left( \sup_{\mathbf{z}\in \operatorname{Opt}_c^0(P,Q)} \mathbb{G}^{d^p}_Q(\mathbf{z})\right)^{\frac{1}{p}}.
\end{align*}
 \end{itemize}
\item  \textbf{(Two sample case)}
if $n,m\rightarrow\infty$ with $\frac{m}{n+m}\rightarrow \lambda\in  (0,1)$,  then
\begin{itemize}
     \item if $\mathcal{W}_p(P,Q)\neq 0$, 
     \begin{align*}
	\sqrt{\frac{nm}{n+m}}\left(\mathcal{W}_p(P_n,Q_m)- \mathcal{W}_p(P,Q)\right)\overset{w}\longrightarrow \frac{\sup_{\mathbf{z}\in \operatorname{Opt}_c^0(P,Q)} \left(\sqrt{\lambda} \sum_{i=1}^N z_i  X_i+(\sqrt{1-\lambda})\mathbb{G}^{d^p}_Q(\mathbf{z})\right)}{p\left(\mathcal{W}_p(P,Q)\right)^{p-1}} ,
\end{align*}
     \item if $\mathcal{W}_p(P,Q)= 0$, 
          \begin{align*}
	{\frac{nm}{n+m}}^{\frac{1}{2p}}\mathcal{W}_p(P,Q_m)\overset{w}\longrightarrow\left( \sup_{\mathbf{z}\in \operatorname{Opt}_c^0(P,Q)} \left(\sqrt{\lambda} \sum_{i=1}^N z_i  X_i+(\sqrt{1-\lambda})\mathbb{G}^{d^p}_Q(\mathbf{z})\right)\right)^{\frac{1}{p}},
\end{align*}
 \end{itemize}
\end{itemize}
Here $(X_1,\dots,X_N)\sim \mathcal{N}(\mathbf{0},\Sigma(\mathbf{p}) $, for $\Sigma(\mathbf{p}) $ defined in \eqref{sigma_Natrix},  and  $\mathbb{G}^{d^p}_Q$ is a centered Gaussian process with covariance function $\Xi^{d^p}(Q)$, defined in \eqref{xi_Natrix}.
Moreover, $\mathbb{G}^{d^p}_Q$ and $(X_1,\dots,X_N)$ are independent.
\end{Corollary}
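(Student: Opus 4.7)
The plan is to reduce the statement to Theorem~\ref{Teoremaprinci} applied to the cost $c=d^p$, for which $\mathcal{T}_c=\mathcal{W}_p^p$, and then transfer the limits to $\mathcal{W}_p$ by a delta-method argument.

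First I would verify the integrability hypotheses. The cost $d^p$ is continuous and non-negative, and the triangle inequality combined with the elementary bound $(a+b)^p\leq 2^{p-1}(a^p+b^p)$ gives
$$d(\mathbf{x}_i,\mathbf{y})^p\leq 2^{p-1}\bigl(d(\mathbf{x}_i,\mathbf{x}_0)^p+d(\mathbf{x}_0,\mathbf{y})^p\bigr),\qquad i=1,\dots,N.$$
Since $\mathbb{X}$ is finite, assumption \eqref{assumptio:1_disc} implies \eqref{cond}, and the same computation with exponent $2p$ shows that \eqref{assumptio:1_disc2} implies \eqref{cuadratic}. Theorem~\ref{Teoremaprinci} applied to $c=d^p$ therefore yields the weak limits of $\sqrt{n}(\mathcal{W}_p^p(P_n,Q)-\mathcal{W}_p^p(P,Q))$, $\sqrt{m}(\mathcal{W}_p^p(P,Q_m)-\mathcal{W}_p^p(P,Q))$ and the two-sample analogue, each written as a supremum over $\operatorname{Opt}_{d^p}^0(P,Q)$; the independence of $\mathbb{G}^{d^p}_Q$ and $(X_1,\dots,X_N)$ is inherited directly from the theorem.

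In the non-degenerate case $\mathcal{W}_p(P,Q)>0$, I would invoke the delta method with $\phi:[0,\infty)\to[0,\infty)$, $\phi(t)=t^{1/p}$. Since $\phi$ is Fréchet differentiable at $t_0=\mathcal{W}_p^p(P,Q)>0$, with derivative $\phi'(t_0)=(1/p)\,\mathcal{W}_p(P,Q)^{1-p}$, multiplying each of the three limits above by this scalar produces the stated expressions with the prefactor $1/(p\,\mathcal{W}_p(P,Q)^{p-1})$.

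The subtle case is $\mathcal{W}_p(P,Q)=0$, where $\phi$ fails to be differentiable at the target point and the delta method is unavailable; this is where I expect the main difficulty to lie. I would bypass it by observing that $\mathcal{T}_{d^p}(P_n,Q)\geq 0=\mathcal{T}_{d^p}(P,Q)$ for every $n$, so the pre-limit sequence in Theorem~\ref{Teoremaprinci} is non-negative and its weak limit $W:=\sup_{\mathbf{z}\in\operatorname{Opt}_{d^p}^0(P,Q)}\sum_{i=1}^N z_iX_i$ is therefore non-negative almost surely (as a weak limit of non-negative variables). The identity
$$n^{1/(2p)}\mathcal{W}_p(P_n,Q)=\bigl(\sqrt{n}\,\mathcal{T}_{d^p}(P_n,Q)\bigr)^{1/p},$$
together with the continuity of $t\mapsto t^{1/p}$ on $[0,\infty)$ and the continuous mapping theorem, then gives $n^{1/(2p)}\mathcal{W}_p(P_n,Q)\xrightarrow{w}W^{1/p}$, which is precisely the degenerate limit stated in the corollary. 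Exactly the same reasoning, with $\sqrt{m}$ or $\sqrt{nm/(n+m)}$ replacing $\sqrt{n}$, handles the remaining two degenerate cases. The key conceptual point is recognising that in the degenerate regime the delta method must be replaced by the continuous mapping theorem, and justifying the almost-sure non-negativity of the limit law from that of the approximating sequence.
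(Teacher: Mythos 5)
Your proof is correct and follows essentially the same route the paper intends: the paper gives no separate argument for this corollary beyond invoking Theorem~\ref{Teoremaprinci} with $c=d^p$ together with ``a delta-method'' for $t\mapsto t^{1/p}$, which is exactly your non-degenerate case (and your verification that \eqref{assumptio:1_disc} implies \eqref{cond}, and \eqref{assumptio:1_disc2} implies \eqref{cuadratic}, via the triangle inequality is the right preliminary step). Your explicit treatment of the degenerate case $\mathcal{W}_p(P,Q)=0$ --- replacing the inapplicable delta method by non-negativity of the limit (portmanteau) plus the continuous mapping theorem for $t\mapsto t^{1/p}$ on $[0,\infty)$ --- correctly supplies a detail the paper leaves implicit.
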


The next subsection proves Theorem~\ref{Teoremaprinci} in the two sample case. The same proof verbatim applies also for the CLT for the one sample case for $Q$. The  one sample case for $P$ can be proven under weaker moment assumptions on $Q$ and will be commented separately.  
\subsection{Proof of Theorem~\ref{Teoremaprinci}}
The strategy of the proof is the following, first we start by proving the central limit theorem for bounded potentials. That means the study of the  asymptotic behaviour of the sequence 
$$\sqrt{\frac{nm}{n+m}} \left(\sup_{|\mathbf{z}|\leq K} g_c(P_n,Q_m, \mathbf{z})-\sup_{|\mathbf{z}|\leq K} g_c(P,Q, \mathbf{z})\right)_{n,m}.$$
The weak limit depends on the set of restricted optimal points.
$$ \operatorname{Opt}_c^K(P,Q):=\left\lbrace\mathbf{z}: \ \sup_{|\mathbf{s}|\leq K} g_c(P,Q, \mathbf{s})=g_c(P,Q, \mathbf{z}), \ \ z_0=0\right\rbrace.$$
\begin{Lemma}\label{Lemma_two_samples}
Set $K>0$, under the assumptions of Theorem~\ref{Teoremaprinci}, we have the limit
$$\sqrt{\frac{nm}{n+m}} \left(\sup_{|\mathbf{z}|\leq K} g_c(P_n,Q_m, \mathbf{z})-\sup_{|\mathbf{z}|\leq K} g_c(P,Q, \mathbf{z})\right)\stackrel{w}\longrightarrow \sup_{\mathbf{z}\in \operatorname{Opt}_c^K(P,Q)}  \left(\sqrt{\lambda} \sum_{i=1}^N z_i  X_i+(\sqrt{1-\lambda})\mathbb{G}^c_Q(\mathbf{z})\right),$$
with  $(X_1,\dots,X_N)$ and $\mathbb{G}^c_Q$ as in Theorem~\ref{Teoremaprinci}.
\end{Lemma}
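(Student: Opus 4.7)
The plan is to apply a functional delta method to the sup functional on a Donsker class. Write $g_c(P,Q,\mathbf{z}) = P\phi_\mathbf{z} + Qf_\mathbf{z}$ with $\phi_\mathbf{z}(\mathbf{x}_i):=z_i$ and $f_\mathbf{z}(\mathbf{y}):=\inf_i\{c(\mathbf{x}_i,\mathbf{y})-z_i\}$, and set $B_K:=\{\mathbf{z}\in\R^N : |\mathbf{z}|\leq K\}$. Then $\sup_{|\mathbf{z}|\leq K} g_c(P,Q,\mathbf{z}) = \Psi_K(h)$ where $\Psi_K : \ell^\infty(B_K) \to \R$, $h\mapsto \sup_{\mathbf{z}\in B_K} h(\mathbf{z})$, applied to $h(\mathbf{z}):=P\phi_\mathbf{z}+Qf_\mathbf{z}$, and analogously for $(P_n,Q_m)$ with $h_{n,m}(\mathbf{z}):=P_n\phi_\mathbf{z}+Q_mf_\mathbf{z}$. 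Thus it suffices to prove (i) a joint functional CLT for $h_{n,m}-h$ in $\ell^\infty(B_K)$, and (ii) the directional Hadamard differentiability of $\Psi_K$ at $h$.

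For (i), since $P_n-P$ is encoded by the multinomial vector $(p_1^n,\dots,p_N^n)$ with asymptotic covariance $\Sigma(\mathbf{p})$, the process $\sqrt{n}(P_n-P)\phi_\mathbf{z}=\sum_i z_i\sqrt{n}(p_i^n-p_i)$ converges in $\ell^\infty(B_K)$ to the linear process $\mathbf{z}\mapsto \sum_i z_i X_i$ by the finite-dimensional multivariate CLT. For the class $\mathcal{F}_c^K:=\{f_\mathbf{z}: \mathbf{z}\in B_K\}$, Lemma \ref{lemma8lipscit} gives $\|f_\mathbf{z}-f_\mathbf{s}\|_\infty\leq |\mathbf{z}-\mathbf{s}|$, so an $\varepsilon$-covering of $B_K\subset\R^N$ produces $L^2(Q)$-brackets of size $2\varepsilon$ for $\mathcal{F}_c^K$; the bracketing number is thus $O(\varepsilon^{-N})$ and the entropy integral is finite. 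Combined with the envelope $F(\mathbf{y}) = K+\max_i c(\mathbf{x}_i,\mathbf{y}) \in L^2(Q)$ provided by \eqref{cuadratic}, this makes $\mathcal{F}_c^K$ a $Q$-Donsker class, so $\sqrt{m}(Q_m-Q)(f_\mathbf{z}) \stackrel{w}\longrightarrow \mathbb{G}^c_Q(\mathbf{z})$ in $\ell^\infty(B_K)$ with covariance $\Xi^c_Q$. Sample independence together with $\frac{m}{n+m}\to\lambda$ then delivers the joint limit
$$\sqrt{\tfrac{nm}{n+m}}\,(h_{n,m}-h)(\mathbf{z})\stackrel{w}\longrightarrow \sqrt{\lambda}\sum_{i=1}^N z_iX_i + \sqrt{1-\lambda}\,\mathbb{G}^c_Q(\mathbf{z})$$
in $\ell^\infty(B_K)$, with the two Gaussian summands independent.

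For (ii), the sup functional over a compact index set is directionally Hadamard-differentiable at any continuous $h$, with derivative $k\mapsto \sup_{\mathbf{z}\in\arg\max_{B_K} h}k(\mathbf{z})$; this is precisely the $\ell^\infty$-framework of \cite{Luis2020}. Applying the directional delta method with this derivative at $h=g_c(P,Q,\cdot)$ to the joint weak limit from (i) yields the announced result. The identification $\arg\max_{B_K}h = \operatorname{Opt}_c^K(P,Q)$ follows from the translation invariance $g_c(P,Q,\mathbf{z}+t\mathbf{1})=g_c(P,Q,\mathbf{z})$ (the shift in $\sum z_i p_i$, using $\sum p_i=1$, cancels that in the integrand of $Qf_\mathbf{z}$), which allows restricting to $z_1=0$. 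The principal obstacle is the Donsker verification in step (i): one must confirm that the $\varepsilon^{-N}$ bracketing bound with an $L^2(Q)$ envelope suffices under only the second-moment hypothesis \eqref{cuadratic}, while preserving the sample-independence structure that produces the joint Gaussian limit. A secondary delicate point is that $\Psi_K$ is only directionally (not fully) Hadamard-differentiable whenever $\operatorname{Opt}_c^K(P,Q)$ is not a singleton, which is why the generalised delta method of \cite{Luis2020} is essential.
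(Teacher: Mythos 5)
Your proposal follows essentially the same route as the paper: decompose $g_c$ into a multinomial part $P\phi_{\mathbf{z}}$ and a $Q$-Donsker part $Qf_{\mathbf{z}}$, invoke Lemma~\ref{lemma8lipscit} for bracketing in $\ell^\infty(B_K)$, use sample independence to get the joint weak limit, and apply the directional delta method with the sup-derivative from \cite{Luis2020}. The one step you leave implicit but the paper spells out is that the joint weak limit must have a version with sample paths in $\mathcal{C}(\bar{\mathbb{B}}_K(\mathbf{0}))$ (the tangent cone for the Hadamard differentiability in \cite{Luis2020}); the paper derives this from the $\rho_Q$-continuity of $\mathbb{G}_Q$ together with the uniform Lipschitz bound of Lemma~\ref{lemma8lipscit}, which in your notation is exactly the fact that $\mathbf{z}\mapsto f_{\mathbf{z}}$ is a Lipschitz map from $(B_K,|\cdot|)$ into $(\mathcal{F}_c^K,\|\cdot\|_\infty)$, so the proof goes through as you outline it.
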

\begin{proof}[Proof of Lemma~\ref{Lemma_two_samples}.]
For each $K>0$ we define the restricted set  
$$ \mathcal{F}_c^K=\left\lbrace \mathbf{y}\mapsto \inf_{i=1, \dots, N} \{ c(\mathbf{x}_i, \mathbf{y} ) -z_i \} , \ \ \mathbf{x}_i \in \mathbb{X}, \ \ \text{and} \ \ z_1 =0, \ | \mathbf{z}|\leq K\right\rbrace,$$
 Lemma \ref{covering} proves that such a class is $Q$-Donsker, see  Theorem 1.5.7 in \cite{Vart_Well}, in the sense that
$$\sqrt{m}(Q_m-Q) \stackrel{w}\longrightarrow \mathbb{G}_Q\ \ \text{in} \ \ \ell^{\infty}(\mathcal{F}_c^K),$$ 
where $\mathbb{G}_Q$ is the Brownian bridge in $\mathcal{F}_c^K$. This is a centered Gaussian process with covariance function
$$(f,g)\mapsto \int f(\mathbf{y})g(\mathbf{y})dQ(\mathbf{y})-\int f(\mathbf{y})dQ(\mathbf{y})\int g(\mathbf{y})dQ(\mathbf{y}).$$ 
Let $\bar{\mathbb{B}}_K(\mathbf{0})$ be the closure of the centered ball of radius $K$ in $\R^N$. Note that the functional 
\begin{align*}
   C: \ell^{\infty}(\mathcal{F}_c^K)&\longrightarrow  \ell^{\infty}(\bar{\mathbb{B}}_K(\mathbf{0}))\\
   f&\mapsto \left(\mathbf{z}\mapsto f\left( \inf_{i=1, \dots, N} \{ c(\mathbf{x}_i, \mathbf{y} ) -z_i\}\right)\right)
\end{align*}
is actually continuous,  hence for any $f, g\in \ell^{\infty}(\mathcal{F}_c^K)$, we have
\begin{align*}
   \sup_{\mathbf{z}\in \mathbb{B}_K(\mathbf{0})}\left|f\left( \inf_{i=1, \dots, N} \{ c(\mathbf{x}_i, \mathbf{y} ) -z_i\}\right)- g\left( \inf_{i=1, \dots, N} \{ c(\mathbf{x}_i, \mathbf{y} ) -z_i\}\right)\right|=\sup_{\phi \in \ell^{\infty}(\mathcal{F}_c^K)}|f(\phi)-g(\phi)|.
\end{align*}
Moreover, the multivariate CLT implies 
$
    \sqrt{n}\left( \mathbf{p}_n- \mathbf{p}\right)\stackrel{w}\longrightarrow  (X_1,\dots,X_N)\sim N\left(\mathbf{0}, \Sigma(\mathbf{p})\right),
$
where $\Sigma(\mathbf{p})$ is defined in \eqref{sigma_Natrix}. Since the sequences $\sqrt{n}(\mathbf{p}_n-\mathbf{p})$ and $\sqrt{m}(Q_m-Q)$ are  independent we derive the following result.
\begin{Lemma}\label{lemma:Weak_limit_nifotm}
Under the assumptions of Theorem~\ref{Teoremaprinci}, we have the limit 
\begin{equation}\label{rewrittinCLT2}
     \sqrt{\frac{nm}{n+m}} \left( g_c(P_n,Q_m, \cdot)-g_c(P,Q, \cdot)\right)\stackrel{w}\longrightarrow \sqrt{\lambda}\langle{\mathbf{X}}, \cdot\rangle+\sqrt{1-\lambda}C(\mathbb{G}_Q)) \ \ \text{in }\ \ell^{\infty}(\bar{\mathbb{B}}_K(\mathbf{0})),
\end{equation}
with $(X_1,\dots,X_N)=\mathbf{X}$.
\end{Lemma}

Let $(\mathcal{B}, d)$ be a compact metric space,  Corollary~2.3 in \cite{Luis2020}, provides the  directional Hadamard derivative of the functional 
\begin{align*}
    \delta: \ell^{\infty}(\mathcal{B}) &\longrightarrow \R\\
    F&\mapsto \delta(F)=\sup_{\mathbf{z}\in \mathcal{B} }F(\mathbf{z}),
\end{align*}
tangentially to $\mathcal{C}(\mathcal{B})$ (the space of continuous functions from $\mathcal{B}$ to $\R$ ) with respect to $F$ in a direction $G\in \mathcal{C}(\mathcal{B})$.  Recall that a function $f:\Theta\rightarrow \R$, defined in a Banach space, $\Theta$, is said to be \emph{Hadamard directionally differentiable} at $\theta\in \Theta$ tangentially to $\Theta_0\subset \Theta$  if there exists a a function $f'_{\theta}:\Theta_0\rightarrow \R$ such that
\begin{equation*}
    \frac{f(\theta+t_n{h}_n)-f(\theta)}{t_n}\xrightarrow[n\rightarrow\infty]{} f'_{\theta}({h}),\ \ \text{for all sequences $t_n\searrow 0$ and ${h}_n\rightarrow {h}$, for all $h\in \Theta_0$.}
\end{equation*}
If $F\in \mathcal{C}(\mathcal{B}) $ is not identically $0$, the precise formula for the derivative, provided by Corollary~2.3 in \cite{Luis2020}, is
\begin{equation}\label{LuisDeriv}
    \delta'_{F}(G)=\sup_{\{\mathbf{z}: \ F(\mathbf{z})=\delta(F)\}}G(\mathbf{z}), \ \ \text{for  $G\in \mathcal{C}(\mathcal{B})$}.
\end{equation}
In our case the compact metric space is the ball $\bar{\mathbb{B}}_K(\mathbf{0})$, the functional $F$ correspond with $g_c(P,Q, \cdot)$ and the set of optimal points is $ \operatorname{Opt}_c^K(P,Q)$. The following result rewrites  \eqref{LuisDeriv} in our setting. 
\begin{Lemma}
\label{LuisDerivLemma}
Set $K>0$, under the assumptions of Theorem~\ref{Teoremaprinci}, the map $\delta$ is Hadamard directionally differentiable at $ g_c(P,Q, \cdot) $, tangentially to the set 
$\mathcal{C}(\bar{\mathbb{B}}_K(\mathbf{0}))$ with derivative, for $G\in \mathcal{C}(\bar{\mathbb{B}}_K(\mathbf{0}))$,
\begin{equation*}
    \delta'_{g_c(P,Q, \cdot)}(G)=\sup_{\mathbf{z}\in \operatorname{Opt}_c^K(P,Q)}G(\mathbf{z}).
\end{equation*}
\end{Lemma}
The last step is the application of the delta-method. Let $\Theta$ be a Banach space,  $\theta\in \Theta$ and $\{Z_n\}_{n\in \N}$ be a sequence of random variables such that $Z_n:\Omega_n\rightarrow \Theta $ and $r_n(Z_n-\theta)\stackrel{w}{\longrightarrow} Z$ for some sequence $r_n\rightarrow +\infty$ and some random element $Z$ that takes values in $\Theta_0\subset\Theta$. If  $f:\Theta\rightarrow\R$ is Hadamard differentiable at $\theta $ tangentially to $\Theta_0\subset \Theta$,  with derivative $f'_{\theta}(\cdot):\Omega_0\rightarrow \R$, then Theorem~1 in \cite{Romisch}, so-called delta-method, states that  $r_n(f(Z_n)-f(\theta))\stackrel{w}{\longrightarrow} f'_{\theta}(Z)$.
\\

Now, it only remains to prove that the limit in \eqref{rewrittinCLT} belongs to $\mathcal{C}(\bar{\mathbb{B}}_K(\mathbf{0}))$.  Such a limit is a mixture of two independent processes. The first one $\langle{\mathbf{X}}, \cdot\rangle$ has clearly continuous sample paths with respect to the euclidean norm $| \cdot|$ in $\R^N$.  On the other side, $\mathbb{G}_Q$ has continuous sample paths in $\mathcal{F}_c^K$ with respect to the semi-metric 
$$\rho_Q(f)=  \int f(\mathbf{y})^2dQ(\mathbf{y})-\left(\int f(\mathbf{y})dQ(\mathbf{y})\right)^2,$$
in the sense that, see pag 89  in \cite{Vart_Well}, there exists some sequence $\delta_n\searrow 0$ such that 
\begin{equation}
    \label{pcountinous}
     \sup_{f,g\in\mathcal{F}_c^K, \ \rho_Q(f,g)<\delta_n }|\mathbb{G}_Q(f)-\mathbb{G}_Q(g)|\stackrel{a.s.}\longrightarrow 0.
\end{equation}

We want now to analyse the value 
$
\sup_{|\mathbf{z}-\mathbf{s}|<\delta_n}|C(\mathbb{G}_Q)(\mathbf{z})-C(\mathbb{G}_Q)(\mathbf{s})|
$
Note that for every $f\in \mathcal{F}_c^K  $ there exists some $\mathbf{z}^{f}\in \bar{\mathbb{B}}_K(\mathbf{0})$ such that $f(\mathbf{y})=\inf_{i=1, \dots, m} \{ c(\mathbf{x}_i, \mathbf{y} ) -z_i^f \}$. Lemma~\ref{lemma8lipscit} states that 
\begin{equation}
    \label{subset}
    \{f,g\in \mathcal{F}_c^K: |\mathbf{z}^{f}-\mathbf{z}^{g}|<\delta\}\subset \{f,g\in \mathcal{F}_c^K: ||f-g||_{\infty}<\delta\}\subset\{f,g\in\mathcal{F}_c^K: \rho_Q(f,g)<\delta\}.
\end{equation}
Since 
$
    |C(\mathbb{G}_Q)(\mathbf{z}^f)-C(\mathbb{G}_Q)(\mathbf{z}^g)|=|\mathbb{G}_Q(f)-\mathbb{G}_Q(g)|,
$
then we have 
\begin{align*}
   \sup_{|\mathbf{z}^f-\mathbf{z}^g|<\delta_n} |C(\mathbb{G}_Q)(\mathbf{z}^f)-C(\mathbb{G}_Q)(\mathbf{z}^g)|=\sup_{|\mathbf{z}^f-\mathbf{z}^g|<\delta_n}|\mathbb{G}_Q(f)-\mathbb{G}_Q(g)|,
\end{align*}
and, consequently, using \eqref{subset} and \eqref{pcountinous}, we obtain
\begin{align*}
   \sup_{|\mathbf{z}^f-\mathbf{z}^g|<\delta_n} |C(\mathbb{G}_Q)(\mathbf{z}^f)-C(\mathbb{G}_Q)(\mathbf{z}^g)|\leq \sup_{\rho_Q(f,g)<\delta_n}|\mathbb{G}_Q(f)-\mathbb{G}_Q(g)|\stackrel{a.s.}\longrightarrow 0.
\end{align*}

Finally, Lemma \ref{lemma:Weak_limit_nifotm} implies that
$
     \sqrt{\frac{nm}{n+m}} \left( g_c(P_n,Q_m, \cdot)-g_c(P,Q, \cdot)\right)
$
has a weak limit $Z$ in $\ell^{\infty}(\bar{\mathbb{B}}_K(\mathbf{0}))$ having a version in $\mathcal{C}(\bar{\mathbb{B}}_K(\mathbf{0}))$.
Applying the so-called delta-method to the function $\delta$ and Lemma~\ref{LuisDerivLemma} we derive the limit
$$\sqrt{\frac{nm}{n+m}} \left(\sup_{|\mathbf{z}|\leq K} g_c(P_n,Q_m, \mathbf{z})-\sup_{|\mathbf{z}|\leq K} g_c(P,Q, \mathbf{z})\right)\stackrel{w}\longrightarrow \sup_{\mathbf{z}\in \operatorname{Opt}_c^K(P,Q)} Z(\mathbf{z}).$$
Note, that the process 
$\mathbf{z}\mapsto 
\sqrt{1-\lambda}\mathbb{G}_Q\left( \inf_{i=1, \dots, N} \{ c(\mathbf{x}_i, \mathbf{y} ) -z_i\}\right)$ is Gaussian in $\R^N$ with covariance function $\Xi^c_Q$. Moreover, it is independent from $\mathbf{X}$, then the law of the process $Z$ is the same of  the process $\sqrt{\lambda} \langle\mathbf{X}, \cdot\rangle+(\sqrt{1-\lambda})\mathbb{G}^c_Q$ and the theorem holds.
\end{proof}
$ $\\

Unfortunately, the optimal solutions need not be universally bounded. In order to go from the bounded to the unbounded, we observe that Lemma~\ref{Lemma:dualsemi} implies
\begin{align*}
\mathcal{T}_c(P_n,Q_m)=\sup_{\mathbf{z}\in \R^N} g_c(P_n,Q_m, \mathbf{z})=\sup_{|\mathbf{z}|\leq K_{n,m}} g_c(P_n,Q_m, \mathbf{z}),
\end{align*}
for $K_{n,m}= \frac{1}{\inf_i p_i^n}\left( \sup_{i=1, \dots, N}\int  c(\mathbf{y},\mathbf{x}_i) dQ_m(\mathbf{y}) \right)$. Let $K^*$ be the constant provided in Lemma~\ref{Lemma:dualsemi} for $P$ and $Q$ (that means $K^*= \frac{1}{\inf_i p_i}\left( \sup_{i=1, \dots, N}\int  c(\mathbf{y},\mathbf{x}_i) dQ(\mathbf{y}) \right)$). The strong law of large numbers implies the a.s. convergence of $\inf_i p_i^n$ to $\inf_i p_i$, and, assuming \eqref{cuadratic}, we have that the sequence
$
   \sqrt{m}\left( K_{n,m}-K^*\right)
$
is stochastically bounded. Finally,  the difference  $\sqrt{\frac{nm}{n+m}}\left(\mathcal{T}_c(P_n,Q_m)- \mathcal{T}_c(P,Q)\right)$~is equal  to
\begin{equation}\label{rewritten}
    \sqrt{\frac{nm}{n+m}}\left(\mathcal{T}_c(P_n,Q_m)-\sup_{|\mathbf{z}|\leq K^*+1} g_c(P_n,Q_m, \mathbf{z})\right) +  \sqrt{\frac{nm}{n+m}}\left(\sup_{|\mathbf{z}|\leq K^*+1} g_c(P_n,Q_m, \mathbf{z})-\mathcal{T}_c(P,Q)\right),
\end{equation}
and Lemma~\ref{Lemma_two_samples} implies the weak convergence of the second term to
$$\sup_{\mathbf{z}\in \operatorname{Opt}_c^{K^*+1}(P,Q)} \left(\sqrt{\lambda} \sum_{i=1}^N z_i  X_i+(\sqrt{1-\lambda})\mathbb{G}^c_Q(\mathbf{z})\right)=\sup_{\mathbf{z}\in \operatorname{Opt}_c^0(P,Q)} \left(\sqrt{\lambda} \sum_{i=1}^N z_i  X_i+(\sqrt{1-\lambda})\mathbb{G}^c_Q(\mathbf{z})\right), $$
where the equality is a direct consequence of Lemma~\ref{Lemma:dualsemi}. It only remains to prove that the first term of \eqref{rewritten} tends to $ 0$ in probability. Note that we have two cases.
\begin{itemize}
    \item The first one is $K_{n,m}\leq K^*+1$, which implies that 
    $$ \mathcal{T}_c(P_n,Q_m)=\sup_{|\mathbf{z}|\leq K_{n,m}} g_c(P_n,Q_m, \mathbf{z}) =\sup_{|\mathbf{z}|\leq K^*+1} g_c(P_n,Q_m, \mathbf{z})\leq \mathcal{T}_c(P_n,Q_m),$$
    and makes $0$ the first term of \eqref{rewritten}.
    \item The second one is $K_{n,m}\geq K^*+1$, which implies the bound
    \begin{equation}\label{equqtionbounditem}
        0\leq \mathcal{T}_c(P_n,Q_m)-\sup_{|\mathbf{z}|\leq K^*+1} g_c(P_n,Q_m, \mathbf{z})\leq 
    \sup_{|\mathbf{z}|\leq K_{n,m}} g_c(P_n,Q_m, \mathbf{z})-\sup_{|\mathbf{z}|\leq K^*+1} g_c(P_n,Q_m, \mathbf{z}).
    \end{equation}
    Note that the right side of the inequality \eqref{equqtionbounditem}  can be rewritten as 
    $$  \sup_{|\mathbf{z}|\leq K_{n,m}} g_c(P_n,Q_m, \mathbf{z})-\sup_{|\mathbf{z}|\leq K^*+1} g_c(P_n,Q_m, \mathbf{z})=\sup_{|\mathbf{z}|\leq K_{n,m}} \inf_{|\mathbf{z}'|\leq K^*+1} g_c(P_n,Q_m, \mathbf{z})- g_c(P_n,Q_m, \mathbf{z}')$$
    and upper bounded by 
    $$ \sup_{|\mathbf{z}|\leq K_{n,m}} \inf_{|\mathbf{z}'|\leq K^*+1} |g_c(P_n,Q_m, \mathbf{z})- g_c(P_n,Q_m, \mathbf{z}')|.$$
    Since
    \begin{align*}
    |g_c(P_n,Q_m, \mathbf{z})&- g_c(P_n,Q_m, \mathbf{z}')|\\
    &\leq
     \sum_{i=1}^N |z_i-z_i'|p_i^n+\int |\inf_{i=1, \dots, N} \{ c(\mathbf{x}_i, \mathbf{y} ) -z_i \}-\inf_{i=1, \dots, N} \{ c(\mathbf{x}_i, \mathbf{y} ) -z_i' \}|dQ_m(\mathbf{y}),
     \end{align*}
     we can conclude from \eqref{boundLips} that
     $$ 0\leq \mathcal{T}_c(P_n,Q_m)-\sup_{|\mathbf{z}|\leq K^*+1} g_c(P_n,Q_m, \mathbf{z})\leq 2\sup_{|\mathbf{z}|\leq K_{n,m}} \inf_{|\mathbf{z}|\leq K^*+1} |\mathbf{z}-\mathbf{z}'|\leq |K_{n,m} - K^*-1|.$$
\end{itemize}
Both cases together yield the inequality 
\begin{equation}
    \label{inequqltybeforewrong}
    0 \leq \mathcal{T}_c(P_n,Q_m)-\sup_{|\mathbf{z}|\leq K^*+1} g_c(P_n,Q_m, \mathbf{z})\leq |K_{n,m} - K^*-1| \mathbbm{1}_{(K_{n,m}\geq K^*+1)}.
\end{equation}
To see that the $\sqrt{\frac{nm}{n+m}}|K_{n,m} - K^*-1| \mathbbm{1}_{(K_{n,m}\geq K^*+1)}$ tends to $0$ in probability, we write 
$|K_{n,m} - K^*-1| \mathbbm{1}_{(K_{n,m}\geq K^*+1)}=\max(0,K_{n,m} - K^*-1 ).$
Note that 
$$ \sqrt{\frac{nm}{n+m}}\max(0,K_{n,m} - K^*-1 )=\max(0,\sqrt{\frac{nm}{n+m}}\left(K_{n,m} - K^*-1 \right)).$$
Since $\frac{nm}{n+m}\left(K_{n,m} - K^*\right)$ is stochastically  bounded implies that $ \sqrt{\frac{nm}{n+m}}\left(K_{n,m} - K^*-1 \right) $ converges to $-\infty$ in probability and 
$$\sqrt{\frac{nm}{n+m}}|K_{n,m} - K^*-1| \mathbbm{1}_{(K_{n,m}\geq K^*+1)} =\max(0,\sqrt{\frac{nm}{n+m}}\left(K_{n,m} - K^*-1 \right))\stackrel{P}\longrightarrow 0 .$$
That proves Theorem \ref{Teoremaprinci}.


\begin{Remark}
When dealing with the case where the asymptotics depend only on the empirical distribution  $P_n$, note that  Assumption~\eqref{cond}, which depends only on $Q$ 
\begin{align*}
\int c(\mathbf{x}_i, \mathbf{y})dQ(\mathbf{y})<\infty, \ \text{ for all $i=1,\dots, m$},
\end{align*} 
is enough to prove the CLT. Actually,  the multidimensional CLT yields that  
\begin{equation}\label{rewrittinCLT}
     \sqrt{n} \left( g_c(P_n,Q, \cdot)-g_c(P,Q, \cdot)\right)\stackrel{w}\longrightarrow \langle{\mathbf{X}}, \cdot\rangle \ \ \text{in }\ \ell^{\infty}(\bar{\mathbb{B}}_K(\mathbf{0})),
\end{equation}
with $(X_1,\dots,X_N)=\mathbf{X}$. Therefore, all of the previous reasonings can be now repeated verbatim.
\end{Remark}
\subsection{An upper-bound on the expectation}
Theorem \ref{Teoremaprinci} states the central limit theorem, when one of both probabilities is supported on a finite set. Now, we investigate the influence of the number of points of the discrete measure on the convergence bounds. In order to better understand the influence of the number of points, we will restrict our analysis to the euclidean cost. 

\begin{Theorem}\label{bound_exp}
Let $P$ be  supported on $N$ points in $\mathbb{X}$,  $Q\in \mathcal{P}(\mathcal{Y})$ be a distribution  with finite second order moment and $Q_m$ its corresponding empirical version, then
\begin{equation*}
    E\left|\mathcal{W}_1(P,Q_m)-\mathcal{W}_1(P,Q)\right|\leq \frac{8\sqrt{2N}}{\sqrt{m}}K(\mathbb{X}, Q)
\end{equation*}
where 
$$K(\mathbb{X}, Q)=(4\operatorname{diam}(\mathbb{X})+2 \sqrt{ \int | \mathbf{y}|^2dQ(\mathbf{y})}+2\operatorname{diam}(\mathbb{X})) \left(\log(2)+\sqrt{{2 \operatorname{diam}(\mathbb{X})+ 1}}\right). $$
\end{Theorem}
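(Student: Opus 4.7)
The plan is to reduce the expected difference to a uniform empirical-process bound over the $N$-dimensional family of semidiscrete Kantorovich potentials from Lemma~\ref{Lemma:dualsemi}, and then to control that supremum by Dudley chaining using the Lipschitz parameterization given by Lemma~\ref{lemma8lipscit}.

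First, set $c(\mathbf{x},\mathbf{y})=|\mathbf{x}-\mathbf{y}|$. Lemma~\ref{Lemma:dualsemi} writes both $\mathcal{W}_1(P,Q)$ and $\mathcal{W}_1(P,Q_m)$ as suprema of $g_c(P,\cdot,\mathbf{z})$. Because the cost is $1$-Lipschitz, the optimal $\mathbf{z}^{*}$ is $c$-concave, i.e.\ $1$-Lipschitz on $\mathbb{X}$, so under the normalization $z_1=0$ one has $\|\mathbf{z}^{*}\|_{\infty}\leq\operatorname{diam}(\mathbb{X})$ deterministically, for both $Q$ and $Q_m$. Using $g_c(P,Q_m,\mathbf{z})-g_c(P,Q,\mathbf{z})=(Q_m-Q)(f_{\mathbf{z}})$ with $f_{\mathbf{z}}(\mathbf{y})=\min_i\{|\mathbf{x}_i-\mathbf{y}|-z_i\}$, the sup-sup inequality will give
\begin{equation*}
|\mathcal{W}_1(P,Q_m)-\mathcal{W}_1(P,Q)|\leq\sup_{\|\mathbf{z}\|_{\infty}\leq\operatorname{diam}(\mathbb{X}),\,z_1=0}|(Q_m-Q)(f_{\mathbf{z}})|.
\end{equation*}
I then center by replacing $f_{\mathbf{z}}$ with $f_{\mathbf{z}}-f_{\mathbf{z}}(\mathbf{0})$, whose additive constant is annihilated by $Q_m-Q$; the resulting class is $1$-Lipschitz in $\mathbf{y}$ and vanishes at the origin, so it admits the envelope $\mathbf{y}\mapsto|\mathbf{y}|\in L^2(Q)$.

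Next I apply symmetrization,
\begin{equation*}
E\sup_{\mathbf{z}}|(Q_m-Q)(f_{\mathbf{z}})|\leq\frac{2}{m}\,E\sup_{\mathbf{z}}\Bigl|\sum_{j=1}^m\varepsilon_j\bigl(f_{\mathbf{z}}(Y_j)-f_{\mathbf{z}}(\mathbf{0})\bigr)\Bigr|,
\end{equation*}
with $\varepsilon_j$ i.i.d.\ Rademacher. Lemma~\ref{lemma8lipscit} gives $\|f_{\mathbf{z}}-f_{\mathbf{s}}\|_{\infty}\leq\|\mathbf{z}-\mathbf{s}\|_{\infty}$, so the sup-norm, and a fortiori the $L^2(Q_m)$-norm, covering numbers of the class are dominated by those of the $\ell^{\infty}$-ball of radius $\operatorname{diam}(\mathbb{X})$ in $\mathbb{R}^N$, yielding $\log N(\epsilon)\leq N\log(3\operatorname{diam}(\mathbb{X})/\epsilon)$. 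Dudley's entropy integral then bounds the Rademacher average by
\begin{equation*}
\frac{C}{\sqrt{m}}\int_0^{\Delta}\sqrt{N\log(3\operatorname{diam}(\mathbb{X})/\epsilon)}\,d\epsilon=\frac{C\sqrt{N}}{\sqrt{m}}\int_0^{\Delta}\sqrt{\log(3\operatorname{diam}(\mathbb{X})/\epsilon)}\,d\epsilon,
\end{equation*}
where $\Delta$ is an upper bound on the $L^2(Q)$-diameter of the centered class, controlled by $\sqrt{\int|\mathbf{y}|^2\,dQ(\mathbf{y})}+\operatorname{diam}(\mathbb{X})$.

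It remains to evaluate the scalar integral and combine constants. The substitution $u=\log(3\operatorname{diam}(\mathbb{X})/\epsilon)$ will turn $\int_0^{\Delta}\sqrt{\log(3\operatorname{diam}(\mathbb{X})/\epsilon)}\,d\epsilon$ into a tail integral of $\sqrt{u}\,e^{-u}$, which I expect to bound by a multiple of $\log 2+\sqrt{2\operatorname{diam}(\mathbb{X})+1}$; pairing this with the envelope prefactor $4\operatorname{diam}(\mathbb{X})+2\sqrt{\int|\mathbf{y}|^2\,dQ(\mathbf{y})}+2\operatorname{diam}(\mathbb{X})$ should produce the stated $\tfrac{8\sqrt{2N}}{\sqrt{m}}K(\mathbb{X},Q)$. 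The main obstacle will be purely computational: the envelope, diameter, and covering-radius constants must be chosen consistently, and the Dudley integral evaluated with the correct upper limit in order to recover exactly the combination $\log 2+\sqrt{2\operatorname{diam}(\mathbb{X})+1}$. Conceptually, however, the point is transparent: Lemma~\ref{lemma8lipscit} confines all dimensional dependence to the $N$-parameter space of dual potentials, so no covering of $\mathcal{Y}$ enters, and the curse of the ambient dimension is avoided.
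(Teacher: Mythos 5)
Your proposal is correct and mirrors the paper's argument: both reduce $E|\mathcal{W}_1(P,Q_m)-\mathcal{W}_1(P,Q)|$ to an empirical-process supremum over the $N$-parameter family of dual potentials (bounded by $\operatorname{diam}(\mathbb{X})$ via $1$-Lipschitzness), both use Lemma~\ref{lemma8lipscit} to transfer covering numbers to a ball in $\R^N$, and both then apply a Dudley-type entropy integral. The only cosmetic differences are that you do symmetrization and chaining by hand (with an extra centering by $f_{\mathbf{z}}(\mathbf{0})$ to get the envelope $|\mathbf{y}|$) while the paper invokes the packaged Theorem~3.5.1 of Gin\'e--Nickl and evaluates the entropy integral directly via Jensen and the mean value theorem; your constants would therefore come out slightly differently, as you anticipate, but the structure of the bound is identical.
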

\begin{proof}
Let $Y_1, \dots, Y_{m}$ be i.i.d with law $Q$. Recall that, when the cost $c$ is the euclidean distance $|\cdot|$, then the optimal transport potentials are $1$-Lipschitz functions. This yields trivially 
\begin{align}\label{discrete_dual_2}
\mathcal{W}_1(P,Q)=\sup_{\mathbf{z}\in \R^N} g_1(P,Q, \mathbf{z})=\sup_{|\mathbf{z}|\leq \operatorname{diam}(\mathbb{X})} g_1(P,Q, \mathbf{z}),
\end{align}
where
$  g_1(P,Q, \mathbf{z})=  \sum_{i=1}^N z_ip_i+\int \inf_{i=1, \dots, N} \{ |\mathbf{x}_i-\mathbf{y} | -z_i \}dQ(\mathbf{y}).$ We want to bound the quantity
$ E\left|\mathcal{W}_1(P,Q_m)-\mathcal{W}_1(P,Q)\right|$, which can be rewritten, by \eqref{discrete_dual_2}, as 
\begin{align*}
     E\left|\sup_{|\mathbf{z}|\leq \operatorname{diam}(\mathbb{X})} g_1(P,Q, \mathbf{z})-\sup_{|\mathbf{z}|\leq \operatorname{diam}(\mathbb{X})} g_1(P,Q_m, \mathbf{z})\right|,
\end{align*}
and upper bounded by 
$
    E\left|\sup_{f\in \mathcal{F}_1}\int f(\mathbf{y}) (dQ_m(\mathbf{y}) -dQ(\mathbf{y}) )\right|,
$
where
$$ \mathcal{F}_1=\left\lbrace \mathbf{y}\mapsto \inf_{i=1, \dots, N} \{ c(\mathbf{x}_i, \mathbf{y} ) -z_i \} , \ \ \mathbf{x}_i \in \mathbb{X}, \ \ \text{and} \ \ | \mathbf{z}|\leq \operatorname{diam}(\mathbb{X})\right\rbrace.$$
We set $D=\operatorname{diam}(\mathbb{X})$ in order to simplify the following formulas. Denote by ${N}(\epsilon, \mathcal{F}_1, ||\cdot ||_{L^2(Q_m)})$  the he covering number with respect to the  metric $L^2(Q_m).$ Lemma 4.14 in \cite{Massart2007ConcentrationIA} and Lemma~\ref{lemma8lipscit} imply that 
\begin{equation*}
    \sqrt{ \log\left(2{N}(\epsilon, \mathcal{F}_1, ||\cdot ||_{L^2(Q_m)})\right)}\leq \sqrt{ N \log\left(\frac{2{D}}{\epsilon}+1\right)+\log(2)}.
\end{equation*}
Let denote as $a_{N,m}$ the (random) quantity 
$a_{N,m}=2\sqrt{\int \sup_{i=1, \dots, N}|\mathbf{x}_i- \mathbf{y}|^2dQ_m(\mathbf{y})}, $
then
\begin{align}
    \begin{split}
      \label{covering_bound}
   \int_{0}^{{2{D}+a_{N,m}}} \sqrt{ \log\left(2{N}(\epsilon, \mathcal{F}_1, ||\cdot ||_{L^2(Q_m)})\right)}d\epsilon\leq & \sqrt{N} \int_{0}^{{{D}+a_{N,m}}}  \sqrt{  \log\left(\frac{2{D}}{\epsilon}+1\right)}d\epsilon\\&+{D} \log(2).  
    \end{split}
\end{align}

Now recall by Theorem 3.5.1 in \cite{Gin2015MathematicalFO} that 
\begin{align*}
   & \sqrt{m}E\left|\sup_{f\in \mathcal{F}_1}\int f(\mathbf{y}) (dQ_m(\mathbf{y}) -dQ(\mathbf{y}) )\right|\\
    &\leq 8\sqrt{2}E\left|  \int_{0}^{{2{D}+a_{N,m}}}  \sqrt{ \log\left(2{N}(\epsilon, \mathcal{F}_1, ||\cdot ||_{L^2(Q_m)})\right)}d\epsilon \right|\\
    &\leq 8\sqrt{2}\sqrt{N} E\int_{0}^{{2{D}+a_{N,m}}} \sqrt{  \log\left(\frac{2{D}}{\epsilon}+1\right)}d\epsilon+E({{2{D}+a_{N,m}}} ) \log(2),
\end{align*}
where the last inequality is consequence of \eqref{covering_bound}. Since, using Jensen's inequality, 
\begin{align*}
    &\int_{0}^{{2{D}+a_{N,m}}} \sqrt{ \log\left(\frac{2{D}}{\epsilon}+1\right)}d\epsilon\\&\leq ({2{D}+a_{N,m}}) \sqrt{\frac{2 {D} \log({2{D}+a_{N,m}}) + (4 {D} + {a_{N,m}}) \log\left(\frac{4{D}+a_{N,m}}{2{D}+a_{N,m}}\right)}{2{D}+a_{N,m}}}
    \\
    &\leq  \sqrt{{2 {D} ({2{D}+a_{N,m}})\log({2{D}+a_{N,m}}) + (4 {D} + {a_{N,m}}) ({2{D}+a_{N,m}})\log\left(\frac{4{D}+a_{N,m}}{2{D}+a_{N,m}}\right)}}.
\end{align*}
The mean value theorem yields 
$
    \log\left(\frac{4{D}+a_{N,m}}{2{D}+a_{N,m}}\right)\leq  \frac{2{D}}{2{D}}=1,
$
and in consequence the following inequality
$$\int_{0}^{{2{D}+a_{N,m}}} \sqrt{ \log\left(\frac{2{D}}{\epsilon}+1\right)}d\epsilon\leq ({4{D}+a_{N,m}}) \sqrt{{2 {D} + 1}}.$$
Finally we can derive the bound 
\begin{align*}
    \sqrt{m}E\left|\sup_{f\in \mathcal{F}_c^{D}}\int f(\mathbf{y}) (dQ_m(\mathbf{y}) -dQ(\mathbf{y}) )\right|
    &\leq 8\sqrt{2}\sqrt{N} E\{({{4{D}+a_{N,m}}} ) \left(\log(2)+\sqrt{{2 {D} + 1}}\right)\}.
\end{align*}
Since, using triangle inequality, we have 
$$Ea_{N,m}=E\left(2\sqrt{\int \sup_{i=1, \dots, N}| \mathbf{y}- \mathbf{x}_i|^2dQ_m(\mathbf{y})}\right) \leq 2 \sqrt{ \int | \mathbf{y}|^2dQ(\mathbf{y})}+2D,$$ which proves the result.
\end{proof} 

The theorem provides a control on the consistency of the empirical bias for the 1-Wasserstein distance. The rate becomes slower when  $N$ the number of points defining the support of the discrete measures $P$ increases. If $P$ models an approximation of  a continuous probability on $\R^d$, hence the number $N$ required to obtain a proper approximation grows exponentially larger when the dimension $d$ increases. Hence the influence with respect to $N$ stands for the curse of dimension.

A practical  consequence of the previous bound is the following approximation problem.   Suppose that $Q$ and $P$ are a probability distributions supported on a compact set $\Omega\subset\R^d$. Assume  the probability $Q$ is unknown but observed through empirical observations giving rise to the empirical distribution $Q_m$. Let  $P$ be a known distribution which is discretized using $N$ points. Let $P^N$ be this discretization of $P$.  One aims at approximating the true 1-Wasserstein distance $\mathcal{W}_1(P,Q)$ from the empirical semi-discrete distance  $\mathcal{W}_1(P^N,Q_m)$ that can be computed. Theorem~\ref{bound_exp} and triangle inequality gives the following upper bound
\begin{equation*}
E\left|\mathcal{W}_1(P^N,Q_m)-\mathcal{W}_1(P,Q)\right| \leq    \frac{8\sqrt{2N}}{\sqrt{m}}K(\Omega, Q)+\left|\mathcal{W}_1(P^N,Q)-\mathcal{W}_1(P,Q)\right|.
\end{equation*}
We can see that there is a trade-off between the size of the sample and the size of the discretization : the first term requires $N/m$ to be small while the second term is only driven by $N$ the discretization, being smaller when the number of points is larger. 
\\
We illustrate the precision of the upper bound  with the following simulation. Consider the uniform measure on the unit interval  $U(0,1)$ and draw a  sample of size $m=2000$  to obtain the empirical $U_m$. Then from  a uniform discretization of size $N$  of the unit interval, we obtain  the discrete measure $P^{N}$.  We compute, using Monte-Carlo simulations, the empirical error $E|\mathcal{W}_1(P^N,U_m)-\mathcal{W}_1(P^N,U)|$ for different choices for $N$. The results are presented in Figure~\ref{fig:mesh1}.  We observe, in the left figure, that, for regular values of  $N$,  the growth of $E|\mathcal{W}_1(P^N,U_m)-\mathcal{W}_1(P^N,U)|$  is exactly  of order $\sqrt{N}$, following the bound. Yet for larger values of $N$ (right side) we observe that the order is no longer   $\sqrt{N}$. This is because $E|\mathcal{W}_1(P^N,U_m)-\mathcal{W}_1(P^N,U)|$ is only an upper bound and the true rate becomes smaller.

\begin{figure}[h!]
    \centering
    \includegraphics[width=0.4\textwidth]{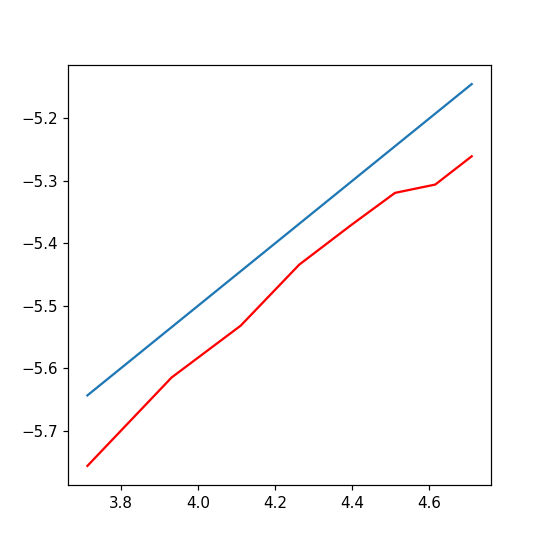}
    \includegraphics[width=0.4\textwidth]{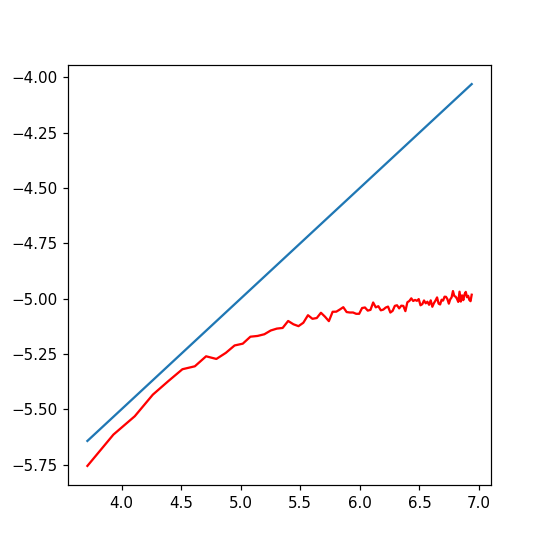}
    \caption{Plot, in double logarithmic scale, of $E|\mathcal{W}_1(P^N,U_m)-\mathcal{W}_1(P^N,U)|$ ($y$ axis) with respect to $N$ ($x$ axis). }
    \label{fig:mesh1}
\end{figure}

\section{Asymptotic Gaussian distribution optimal transport cost}
\subsection{Gaussian Limit for Empirical Optimal trandport cost}
Theorem~\ref{Teoremaprinci} is valid for generic Polish spaces. When $\mathcal{X},\mathcal{Y}$ are subsets of $\R^d$, the limit distribution in the CLT can be specified. Under  the following regularity assumptions, we prove in this section that  the limit distribution is Gaussian. \vskip .1in 
Let $Q\in \mathcal{P}(\R^d)$ be a probability measure absolutely continuous with respect to the Lebesgue measure in $\R^d$. Assume that $c(\mathbf{x},\mathbf{y})=h(\mathbf{x}-\mathbf{y})$ where $h:\R^d\rightarrow [0, \infty)$ is a non negative function satisfying: 
\begin{enumerate}
	\item[(A1):] $h$ is strictly convex on $\R^d$.
	\item[(A2):] Given a radius $r\in \R^+$ and an angle $\theta \in (0,\pi) $, there exists some $M:=M(r, \theta)>0$ such that for all $|\mathbf{p} |>M$, one can find a cone 
	\begin{align}
		K(r, \theta, \mathbf{z},\mathbf{p}):=\left\lbrace \mathbf{x}\in \R^d : | \mathbf{x}-\mathbf{p}|| \mathbf{z}|\cos(\theta/2)\leq \left< \mathbf{z},\mathbf{x}-\mathbf{p} \right>\leq  r| \mathbf{z}| \right\rbrace,
	\end{align}
	with vertex at $\mathbf{p}$ on which $h(\mathbf{x})$ attains its maximum at $\mathbf{p}$.
	\item[(A3):] $\lim_{|\mathbf{x} | \rightarrow 0}\frac{h(\mathbf{x})}{|\mathbf{x} | }= \infty $.
\end{enumerate}
Under such assumptions, \cite{gangbo1996} shows the existence of an optimal transport map $T$ solving
\begin{align}\label{monge}
\mathcal{T}_c(P,Q):=\inf_{T}\int c(\textbf{y},T(\textbf{y})) d Q(\textbf{y}), \ \ \text{and}\ \ T_{\#}Q=P.
\end{align}
The notation $T_{\#}Q$ represents the \emph{Push-Forward} measure, defined for each measurable set $A$ by $T_{\#}Q(A):=Q(T^{-1}(A))$. The solution of \eqref{monge} is called \emph{optimal transport map} from $P$ to $Q$. Moreover it is defined as the unique Borel function satisfying 
\begin{equation}
   T(\mathbf{x})=\mathbf{x}-\nabla h^*(\nabla \varphi(\mathbf{x})), \ \ \text{where $\varphi$ solves \eqref{dual}.} 
\end{equation}
Here $h^*$ denotes the convex conjugate of $h$, see \cite{rockafellar1970}.
Such uniqueness enabled \cite{delbarrio2021central} to deduce the uniqueness, under additive constants, of the solutions of \eqref{dual} in $\varphi$. They assumed (A1)-(A3) to show that if two $c-$concave functions have the same gradient almost everywhere for $\ell_d$ in a connected open set, then both are equal, up to an additive constant. In consequence, assuming that $h$ is differentiable, the interior of the support of $Q$ is connected and with Lebesgue negligible boundary, that is, $\ell_d\left(\partial \operatorname{supp}(Q)\right)=0$, the uniqueness, up to additive constants, of the solutions of \eqref{dual} holds.  
The proof of the main theorem in this section  is a direct consequence of Lemma~\ref{Remark_uniq}, which proves that there exists an unique, up to an additive constant, $\mathbf{z}\in \operatorname{Opt}(P,Q) $.  We use within this section the notation $\mathbf{1}:=(1,\dots,1)$.
\begin{Lemma}\label{Remark_uniq}
Let $P\in \mathcal{P}(\mathbb{X})$ and $Q\in \mathcal{P}(\R^d)$ be such that $Q\ll\ell_d$ and its support is connected with Lebesgue negligible boundary. If the cost $c$ satisfies (A1)-(A3) is differentiable  and 
\begin{align*}
\int c(\mathbf{x}_i, \mathbf{y})dQ(\mathbf{y})<\infty, \ \text{ for all $i=1,\dots,N$.}
\end{align*}
Then the set $\operatorname{Opt}_c^0(P,Q)$ is a singleton.
\end{Lemma}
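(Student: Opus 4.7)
The plan is to reduce the lemma to the uniqueness, up to additive constants, of optimal $c$-concave potentials on the $Q$-side, which is precisely the result from \cite{delbarrio2021central} recalled in the paragraph just before the statement. Then the normalization $z_1=0$ built into $\operatorname{Opt}_c^0(P,Q)$ kills the last degree of freedom.

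First, given any $\mathbf{z}\in\operatorname{Opt}_c(P,Q)$, I would associate the dual pair $\varphi_{\mathbf{z}}(\mathbf{x}_i):=z_i$ on $\mathbb{X}$ and $\psi_{\mathbf{z}}(\mathbf{y}):=\min_{i=1,\dots,N}\{c(\mathbf{x}_i,\mathbf{y})-z_i\}$ on $\mathcal{Y}$. By Remark~\ref{discrete_dual_remark}, $(\varphi_{\mathbf{z}},\psi_{\mathbf{z}})$ solves \eqref{dual}, and $\psi_{\mathbf{z}}$ is $c$-concave as an infimum of finitely many $c$-affine functions. Given two candidates $\mathbf{z}^{(1)},\mathbf{z}^{(2)}\in\operatorname{Opt}_c^0(P,Q)$, the two optimal potentials $\psi_{\mathbf{z}^{(1)}},\psi_{\mathbf{z}^{(2)}}$ both produce, via the relation $T(\mathbf{y})=\mathbf{y}-\nabla h^*(\nabla\psi(\mathbf{y}))$, the unique optimal transport map from $Q$ to $P$ guaranteed by (A1)--(A3). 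Strict convexity of $h^*$ makes $\nabla h^*$ injective, so $\nabla\psi_{\mathbf{z}^{(1)}}=\nabla\psi_{\mathbf{z}^{(2)}}$ $\ell_d$-almost everywhere on $\operatorname{supp}(Q)$; combined with connectedness of the interior of $\operatorname{supp}(Q)$ and $\ell_d(\partial\operatorname{supp}(Q))=0$, this upgrades to $\psi_{\mathbf{z}^{(1)}}-\psi_{\mathbf{z}^{(2)}}=c_0$ for some constant $c_0$ on $\operatorname{supp}(Q)$.

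To pass from this identity back to the vectors, I would introduce the Laguerre cells $L_i(\mathbf{z}^{(k)}):=\{\mathbf{y}\in\operatorname{supp}(Q):\psi_{\mathbf{z}^{(k)}}(\mathbf{y})=c(\mathbf{x}_i,\mathbf{y})-z_i^{(k)}\}$. Because $T_{\#}Q=P$, each Laguerre cell satisfies $Q(L_i(\mathbf{z}^{(k)}))=p_i>0$, and uniqueness of the transport map forces $L_i(\mathbf{z}^{(1)})=L_i(\mathbf{z}^{(2)})$ up to $Q$-null sets for every $i$. On this common cell the two potentials read $c(\mathbf{x}_i,\mathbf{y})-z_i^{(1)}$ and $c(\mathbf{x}_i,\mathbf{y})-z_i^{(2)}$, so the relation $\psi_{\mathbf{z}^{(1)}}-\psi_{\mathbf{z}^{(2)}}=c_0$ evaluates to $z_i^{(2)}-z_i^{(1)}=c_0$ for every index $i=1,\dots,N$. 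Applying this at $i=1$ with $z_1^{(1)}=z_1^{(2)}=0$ gives $c_0=0$, and therefore $\mathbf{z}^{(1)}=\mathbf{z}^{(2)}$.

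The main obstacle is not any single calculation but rather an accurate invocation of the auxiliary result from \cite{delbarrio2021central}, together with a careful verification that the optimal transport map is indeed uniquely determined by $\psi_{\mathbf{z}}$ under the weaker regularity imposed here (differentiability of $h$ but nothing more than (A1)--(A3) on its geometry). Once the equality of Laguerre cells $Q$-a.e.\ is in hand, the transition from a function-level identity to the vector identity is essentially bookkeeping on cells of strictly positive mass.
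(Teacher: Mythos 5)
Your proposal is correct and follows the same route the paper indicates in the paragraph preceding the lemma: invoke the uniqueness (up to additive constants) of the $c$-concave $Q$-side potential from \cite{delbarrio2021central} via the uniqueness of the optimal map under (A1)--(A3), then normalize $z_1=0$. The paper leaves the passage from the function-level identity $\psi_{\mathbf{z}^{(1)}}-\psi_{\mathbf{z}^{(2)}}=c_0$ back to equality of the vectors implicit; your Laguerre-cell argument (using $Q(L_i)=p_i>0$) is a correct way to close that small gap.
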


The following theorem states, under the previous assumptions, that the limit distribution described in Theorem~\ref{Teoremaprinci} is the centered Gaussian variable  $\left(\sqrt{\lambda} \sum_{i=1}^N z_i  X_i+(\sqrt{1-\lambda})\mathbb{G}^c_Q(\mathbf{z})\right)$ where $\{\mathbf{z}\}= \operatorname{Opt}_c^0(P,Q)$.
Note that  $\sum_{i=1}^N z_i  X_i$  is Gaussian and centered, with variance
\begin{equation}\label{definition_gaussian}
 \sigma^2(P,\mathbf{z})=\operatorname{Var}( \sum_{i=1}^N z_i  X_i)\ \ \text{and } \ \ (X_1,\dots,X_N)\sim\mathcal{N}(\mathbf{0},\Sigma(\mathbf{p})),
\end{equation}
where $\Sigma(\mathbf{p})$ is defined in \eqref{sigma_Natrix}. On the other side $\mathbb{G}^c_Q(\mathbf{z})$ follows the distribution $\mathcal{N}(0,\sigma^2_c(Q,\mathbf{z})))$, where 
\begin{equation}\label{definition_gaussiancont}
\sigma^2_c(Q,\mathbf{z})=\int \inf_{i=1, \dots, N} \{ c(\mathbf{x}_i, \mathbf{y} )-z_i\}^2dQ(\mathbf{y})\\
-\int \inf_{i=1, \dots, N} \{ c(\mathbf{x}_i, \mathbf{y} )-z_i\}^2dQ(\mathbf{y}).
\end{equation}
Since, for every $\lambda\in \R$,  we have that $ \sigma^2(P,\mathbf{z})= \sigma^2(P,\mathbf{z}+\lambda\mathbf{1})$, then the asymptotic variance obtained in the following theorem is well defined.
\begin{Theorem}\label{theo:semi_disc-p}
Let $P\in \mathcal{P}(\mathbb{X})$ and $Q\in \mathcal{P}(\R^d)$ be such that $Q\ll\ell_d$ and its support is connected with Lebesgue negligible boundary. If the cost $c$ satisfies (A1)-(A3) is differentiable and 
\begin{align*}
\int c(\mathbf{x}_i, \mathbf{y})dQ(\mathbf{y})<\infty, \ \text{ for all $i=1,\dots,N$.}
\end{align*}
then the following limits hold.
\begin{itemize}
    \item \textbf{(One sample case for $P$)}
    \begin{align*}
	\sqrt{n}\left(\mathcal{T}_c(P_n,Q)- \mathcal{T}_c(P,Q)\right)\xrightarrow{w} X\sim \mathcal{N}(0,\sigma^2(P,\mathbf{z})).
\end{align*}
\end{itemize}
Suppose that
\begin{align}\label{cuadratic2}
\int c(\mathbf{x}_i, \mathbf{y})^2dQ(\mathbf{y})<\infty, \ \text{ for all $i=1,\dots,N$}.
\end{align} 
\begin{itemize}
 \item \textbf{(0ne sample case for $Q$)}
$$\sqrt{m}\left(\mathcal{T}_c(P,Q_m)- \mathcal{T}_c(P,Q)\right)\stackrel{w}\longrightarrow Y\sim \mathcal{N}(0,\sigma^2_c(Q,\mathbf{z})).$$
\item  \textbf{(Two sample case )}
if $n,m\rightarrow\infty$, with $\frac{m}{n+m}\rightarrow \lambda\in  (0,1)$,  then
$$\sqrt{\frac{nm}{n+m}}\left(\mathcal{T}_c(P_n,Q_m)- \mathcal{T}_c(P,Q)\right)\stackrel{w}\longrightarrow  \sqrt{\lambda}X+(\sqrt{1-\lambda})Y.$$
\end{itemize}
Here, $ \sigma^2(P,\mathbf{z})$  and $ \sigma^2_c(Q,\mathbf{z})$ are defined in  \eqref{definition_gaussian} and  \eqref{definition_gaussiancont} and, moreover, $X$ and $Y$ are independent.
\end{Theorem}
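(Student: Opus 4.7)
The plan is to derive Theorem~\ref{theo:semi_disc-p} as a direct corollary of Theorem~\ref{Teoremaprinci} by using Lemma~\ref{Remark_uniq} to collapse each supremum to a single evaluation. The heavy lifting is already packaged in Lemma~\ref{Remark_uniq}, which asserts that under (A1)--(A3), differentiability of $h$, and the regularity hypotheses on $Q$, the set $\operatorname{Opt}_c^0(P,Q) = \{\mathbf{z}\}$ is a singleton. Once this is granted, the rest of the argument is formal.

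First I would apply Theorem~\ref{Teoremaprinci} in the one-sample case for $P$, obtaining
\begin{equation*}
    \sqrt{n}\bigl(\mathcal{T}_c(P_n,Q) - \mathcal{T}_c(P,Q)\bigr) \xrightarrow{w} \sup_{\mathbf{z}' \in \operatorname{Opt}_c^0(P,Q)} \sum_{i=1}^N z_i' X_i.
\end{equation*}
By Lemma~\ref{Remark_uniq} the supremum is taken over $\{\mathbf{z}\}$ and therefore equals $\sum_{i=1}^N z_i X_i$, which is a centered Gaussian with variance $\sigma^2(P,\mathbf{z})$ as required by \eqref{definition_gaussian}. The one-sample case for $Q$ is identical under the $L^2$ assumption \eqref{cuadratic2}: the limit $\sup_{\mathbf{z}' \in \operatorname{Opt}_c^0(P,Q)} \mathbb{G}^c_Q(\mathbf{z}')$ collapses to $\mathbb{G}^c_Q(\mathbf{z}) \sim \mathcal{N}(0, \sigma^2_c(Q,\mathbf{z}))$ by construction of $\mathbb{G}^c_Q$. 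For the two-sample case I would combine the two reductions, invoking the independence of $(X_1,\ldots,X_N)$ and $\mathbb{G}^c_Q$ already granted in Theorem~\ref{Teoremaprinci}, to obtain $\sqrt{\lambda}\,X + \sqrt{1-\lambda}\,Y$ as the weak limit.

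The well-definedness of the variances, despite the uniqueness being only up to additive constants, follows from the invariance $\sigma^2(P,\mathbf{z}) = \sigma^2(P,\mathbf{z}+\lambda\mathbf{1})$ noted just before the theorem and from the analogous translation invariance of the covariance kernel $\Xi^c_Q$; normalizing $z_1 = 0$ then fixes a canonical representative, so the limit distributions do not depend on any spurious choice.

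The only substantive obstacle is the uniqueness encoded in Lemma~\ref{Remark_uniq}. Its proof relies on the Gangbo--McCann theory \cite{gangbo1996}, which under (A1)--(A3) provides a unique optimal transport map $T(\mathbf{x}) = \mathbf{x} - \nabla h^*(\nabla \varphi(\mathbf{x}))$ and thereby pins down $\nabla \varphi$ almost everywhere on the interior of $\operatorname{supp}(Q)$; together with the result from \cite{delbarrio2021central} that two $c$-concave functions with equal gradients on a connected open set with Lebesgue negligible boundary must differ only by an additive constant, this forces the vector $(\varphi(\mathbf{x}_1), \ldots, \varphi(\mathbf{x}_N))$ identified in Remark~\ref{discrete_dual_remark} to be determined up to a translation by $\lambda\mathbf{1}$. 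Everything else in Theorem~\ref{theo:semi_disc-p} is bookkeeping.
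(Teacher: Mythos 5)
Your proposal is correct and follows essentially the same route as the paper: the weak limits in Theorem~\ref{Teoremaprinci} are suprema over $\operatorname{Opt}_c^0(P,Q)$, Lemma~\ref{Remark_uniq} shows this set is a singleton $\{\mathbf{z}\}$ under the stated regularity, so each supremum collapses to a single Gaussian evaluation, and the translation invariance $\sigma^2(P,\mathbf{z})=\sigma^2(P,\mathbf{z}+\lambda\mathbf{1})$ (together with the analogous invariance of $\Xi^c_Q$) makes the variances well defined independently of the normalization $z_1=0$. This matches the paper's own proof, which is presented in the text surrounding the theorem rather than in a dedicated proof environment.
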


As in  the previous section, we provide an application to the CLT for Wasserstein distances. The potential costs $c_p=|\cdot|^p$, for $p>0$, satisfy (A1)-(A3), then the following result follows immediately from Theorem~\ref{theo:semi_disc-p} and the Delta-Method for the function $t\mapsto |t|^{\frac{1}{p}}$. Recall that, in the potential cost cases, $\mathcal{T}_p(P,Q)$ denotes the optimal transport cost and $\mathcal{W}_p(P,Q)=\left(\mathcal{T}_p(P,Q)\right)^{\frac{1}{p}}$ the $p$-Wasserstein distance.
\begin{Corollary}\label{Coro:semi_disc-p}
Let $P\in \mathcal{P}(\mathbb{X})$ be as in \eqref{represen} and $Q\in \mathcal{P}(\R^d)$ be such that $Q\ll\ell_d$, has finite moments of order $p$ and its support is connected with Lebesgue negligible boundary.
Then, for every $p>1$, we have that
\begin{itemize}
    \item \textbf{(One sample case for $P$)}
\begin{align*}
	\sqrt{n}\left(\mathcal{T}_p(P_n,Q)- \mathcal{T}_p(P,Q)\right)\xrightarrow{w} \mathcal{N}(0,\sigma^2(P,\mathbf{z})),
\end{align*}
and 
\begin{align*}
	\sqrt{n}\left(\mathcal{W}_p(P_n,Q)- \mathcal{W}_p(P,Q)\right)\stackrel{w}\longrightarrow \mathcal{N}\left(0,\left(\frac{1}{p\left( \mathcal{W}_p(P,Q)\right)^{p-1}}\right)^2\sigma^2(P,\mathbf{z})\right),
\end{align*}
\end{itemize}
Suppose that $Q$ has finite moments of order $2p$,
then
\begin{itemize}
 \item \textbf{(0ne sample case for $Q$)}
 \begin{align*}
	\sqrt{m}\left(\mathcal{T}_p(P,Q_m)- \mathcal{T}_p(P,Q)\right)\xrightarrow{w} \mathcal{N}(0,\sigma^2_{p}(Q,\mathbf{z})),
\end{align*}
and 
\begin{align*}
	\sqrt{m}\left(\mathcal{W}_p(P,Q_m)- \mathcal{W}_p(P,Q)\right)\stackrel{w}\longrightarrow \mathcal{N}\left(0,\left(\frac{1}{p\left( \mathcal{W}_p(P,Q)\right)^{p-1}}\right)^2\sigma^2_p(Q,\mathbf{z})\right),
\end{align*}
\item  \textbf{(Two sample case)}
if $n,m\rightarrow\infty$, with $\frac{m}{n+m}\rightarrow \lambda\in  (0,1)$,  then
 \begin{align*}
\sqrt{\frac{nm}{n+m}}\left(\mathcal{T}_p(P_n,Q_m)- \mathcal{T}_p(P,Q)\right)\overset{w}\longrightarrow \mathcal{N}(0,\lambda \sigma^2(P,\mathbf{z})+(1-\lambda)\sigma^2_{p}(Q,\mathbf{z})),
\end{align*}
and 
 \begin{align*}
\sqrt{\frac{nm}{n+m}}\left(\mathcal{W}_p(P_n,Q_m)- \mathcal{W}_p(P,Q)\right)\overset{w}\longrightarrow \mathcal{N}\left(0,\left(\frac{1}{p\left( \mathcal{W}_p(P,Q)\right)^{p-1}}\right)^2\left(\lambda \sigma^2(P,\mathbf{z})+(1-\lambda)\sigma^2_{p}(Q,\mathbf{z})\right)\right).
\end{align*}
\end{itemize}
Here, $\sigma^2(P,\mathbf{z})$ and $\sigma^2_{p}(Q,\mathbf{z})$ are defined in \eqref{definition_gaussian} and \eqref{definition_gaussiancont}
for $\mathbf{z}\in \operatorname{Opt}_{|\cdot|^p}(P,Q)$ and the cost $|\cdot|^p$.
\end{Corollary}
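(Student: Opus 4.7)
The plan is to derive the corollary as a direct consequence of Theorem~\ref{theo:semi_disc-p} applied to the cost $c_p(\mathbf{x},\mathbf{y})=|\mathbf{x}-\mathbf{y}|^p=h_p(\mathbf{x}-\mathbf{y})$, followed by the standard delta-method for the scalar transformation $t\mapsto t^{1/p}$. The whole proof should reduce to two stages: checking that the hypotheses of Theorem~\ref{theo:semi_disc-p} are met by $h_p$ under the stated moment assumptions on $Q$, and then propagating the Gaussian limit through $t\mapsto t^{1/p}$.

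For the first stage I would verify (A1)--(A3) for $h_p(\mathbf{x})=|\mathbf{x}|^p$ with $p>1$. Strict convexity (A1) is immediate from $p>1$, and the behaviour at infinity (A3) follows from $h_p(\mathbf{x})/|\mathbf{x}|=|\mathbf{x}|^{p-1}\to\infty$. For the cone condition (A2), given $r$ and $\theta$, for $\mathbf{p}$ with $|\mathbf{p}|$ large I would take $\mathbf{z}=-\mathbf{p}/|\mathbf{p}|$, so that the cone $K(r,\theta,\mathbf{z},\mathbf{p})$ consists of points contained in a small neighbourhood of $\mathbf{p}$ lying on the side of the origin; a short computation shows that for $|\mathbf{p}|>M(r,\theta)$ all such points satisfy $|\mathbf{x}|\leq|\mathbf{p}|$, so $|\mathbf{x}|^p$ attains its maximum on the cone at $\mathbf{p}$. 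The integrability condition $\int|\mathbf{x}_i-\mathbf{y}|^p\,dQ(\mathbf{y})<\infty$ for each $i$ follows from the finite $p$-th moment assumption on $Q$ and the bound $|\mathbf{x}_i-\mathbf{y}|^p\leq 2^{p-1}(|\mathbf{x}_i|^p+|\mathbf{y}|^p)$; the corresponding squared condition \eqref{cuadratic2} follows analogously from finiteness of the $2p$-th moment. The connectedness and negligible boundary hypothesis is already in the statement, so Theorem~\ref{theo:semi_disc-p} applies and yields the three $\mathcal{T}_p$ limits of the corollary directly, with the variances $\sigma^2(P,\mathbf{z})$ and $\sigma^2_p(Q,\mathbf{z})$ computed via \eqref{definition_gaussian}--\eqref{definition_gaussiancont} at the unique (up to constants) optimizer $\mathbf{z}\in\operatorname{Opt}_{|\cdot|^p}(P,Q)$ supplied by Lemma~\ref{Remark_uniq}.

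For the second stage I would apply the classical delta-method to $\phi(t)=t^{1/p}$ at the point $t_0=\mathcal{T}_p(P,Q)$. Since $\mathcal{W}_p(P,Q)\neq 0$ implies $t_0>0$, $\phi$ is differentiable at $t_0$ with $\phi'(t_0)=\tfrac{1}{p}t_0^{1/p-1}=\tfrac{1}{p}\bigl(\mathcal{W}_p(P,Q)\bigr)^{1-p}$. Composing with the CLT for $\mathcal{T}_p$ already established in the first stage, the scalar delta-method multiplies the asymptotic variance by $\phi'(t_0)^2=\bigl(p\,\mathcal{W}_p(P,Q)^{p-1}\bigr)^{-2}$, giving precisely the three stated limits for $\mathcal{W}_p$ in the one-sample-$P$, one-sample-$Q$, and two-sample cases; in the two-sample case, independence of $X$ and $Y$ allows the variances to be summed with the weights $\lambda$ and $1-\lambda$.

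I expect no genuine obstacle beyond a clean bookkeeping: the only slightly delicate point is the verification of the cone condition (A2) for $|\cdot|^p$, since the definition of $K(r,\theta,\mathbf{z},\mathbf{p})$ is geometric and one must choose the direction $\mathbf{z}$ so that the cone sits \emph{inside} the ball of radius $|\mathbf{p}|$; this is why the restriction $p>1$ (needed also for (A1)) is natural here. All remaining steps, namely moment bounds, integrability, and the scalar delta-method, are routine once Theorem~\ref{theo:semi_disc-p} is invoked.
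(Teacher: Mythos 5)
Your proposal is correct and follows the same route as the paper: verify that $h_p=|\cdot|^p$ with $p>1$ satisfies (A1)--(A3) and the relevant moment conditions, invoke Theorem~\ref{theo:semi_disc-p} to get the Gaussian limit for $\mathcal{T}_p$, then apply the scalar delta-method to $t\mapsto t^{1/p}$ at $t_0=\mathcal{T}_p(P,Q)>0$ (positive because $P$ is discrete and $Q$ absolutely continuous). The paper treats the verification of (A1)--(A3) as known (citing \cite{gangbo1996} and \cite{delbarrio2021central}), whereas you spell it out; note also that (A3) should read $|\mathbf{x}|\to\infty$ rather than $|\mathbf{x}|\to 0$, as you implicitly and correctly assume, since the literal version would contradict (A1).
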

Since $P$ is discrete and $Q$ is continuous, $\mathcal{W}_p(P,Q)>0$ and the limit distribution of Corollary~\ref{Coro:semi_disc-p} is always well defined.

Note that Corollary~\ref{Coro:semi_disc-p} is a particular case of Corollary~\ref{coro:semi_disc} in the cases where the optimal transport potential is unique---the hypotheses of Theorem~\ref{theo:semi_disc-p} hold---which is the reason why the case $p=1$ can not be considered. Concerning other potential costs, $p>1$,  it is straightforward to see that the hypotheses (A1)-(A3) hold, see for instance \cite{delbarrio2021central} or  \cite{gangbo1996}. 
\subsection{Simulations}
This  section is devoted to illustrate empirically Theorems~\ref{Teoremaprinci} and \ref{theo:semi_disc-p}. The limit distribution depends on the true Wasserstein distance between the distributions. Hence to simulate the central limit theorems, the difficulty lies in proving the consistency of its bootstrap approximation. Actually the non fully Hadamard differentiability of the functional implies that  the limit in Theorem~\ref{Teoremaprinci} is the supremum of Gaussian processes. In consequence, as pointed out in \cite{Santos}, the bootstrap will not be consistent. However,  in the framework of Theorem~\ref{theo:semi_disc-p}, the dual problem has a unique solution. In consequence, the mapping is fully Hadamard differentiable (Corollary~2.4 in \cite{Luis2020}) which implies that the bootstrap procedure is consistent (\cite{Santos}). This enables us to approximate the variance as shown in the following simulations. \vskip .1in

Here we implement one favorable case for bootstrap approximation. In particular we choose the quadratic cost $|\cdot |^2$ and the discrete probability $P=\frac{1}{7}\sum_{i=1}^7\delta_{\mathbf{x}_i}$, where $$ \mathbb{X}=\{\mathbf{x}_i\}_{i=1}^7=\{(1,0,0)  , (0,1,0)  ,(0,0,1) , (-1,0,0) , (0,-1,0) ,(0,0,-1) ,(0,0,0) \}. $$ 
The continuous probability $Q\in \mathcal{P}(\R^3)$ is the direct product  $\mathcal{U}(-1,1)\times\mathcal{N}(0,1)\times\mathcal{N}(0,1)$. 
\begin{figure}[h!]
   \centering
    \includegraphics[width=0.4\textwidth]{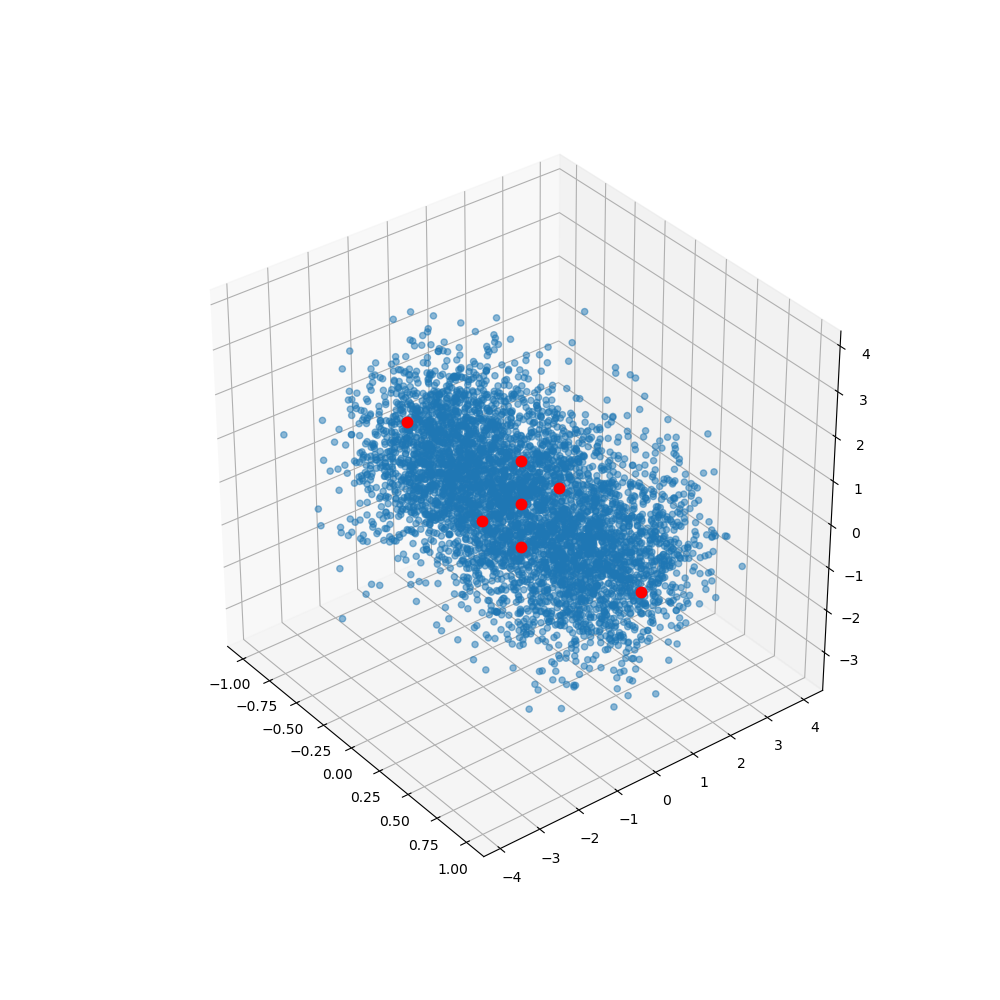}
     \includegraphics[width=0.4\textwidth]{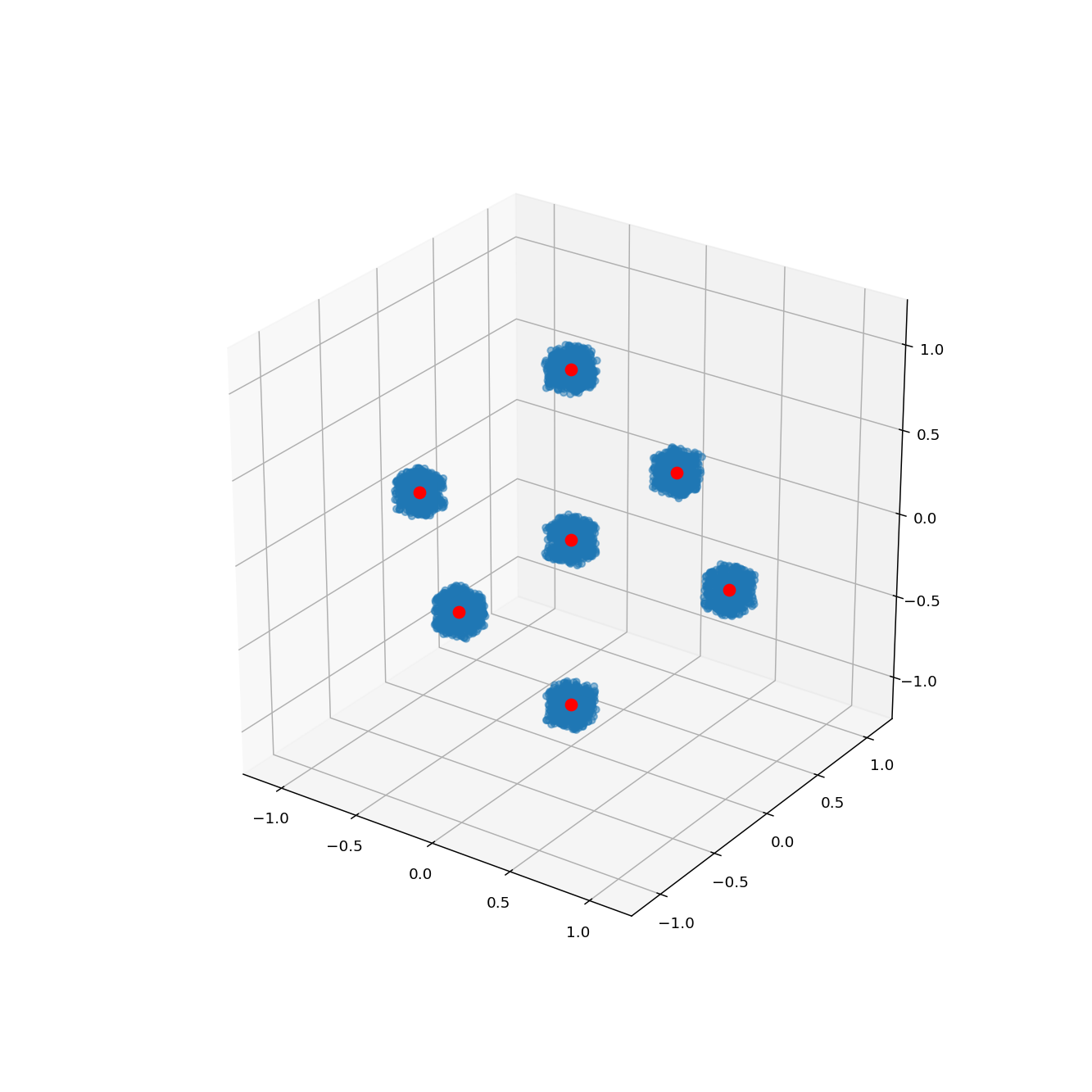}
    \caption{3D visualization of the data set. In blue the continuous distribution $Q$ and in red the discrete one $P$. Left: $Q$ has a density with connected  support and  satisfies the assumptions of Theorem~\ref{theo:semi_disc-p}. Right:  $Q$ has a density but not a connected  support. }
    \label{fig:3dvisu}
\end{figure}
Note that its support is connected with Lebesgue negligible boundary---we can visualize the data in Figure~\ref{fig:3dvisu}---and satisfies  the assumptions of Theorem~\ref{theo:semi_disc-p}. As commented before, we can use the bootstrap procedure. In this example, it is assumed that the discrete $P$ is known and the sample, of size $m=5000$, comes from the continuous $Q$. Figure~\ref{fig:CLTBootstrap} shows the result of the bootstrap procedure for a re-sampling size of $10000$. The simulations follow the asymptotic theory we provide.
\begin{figure}[h!]
   \centering
    \includegraphics[width=0.4\textwidth]{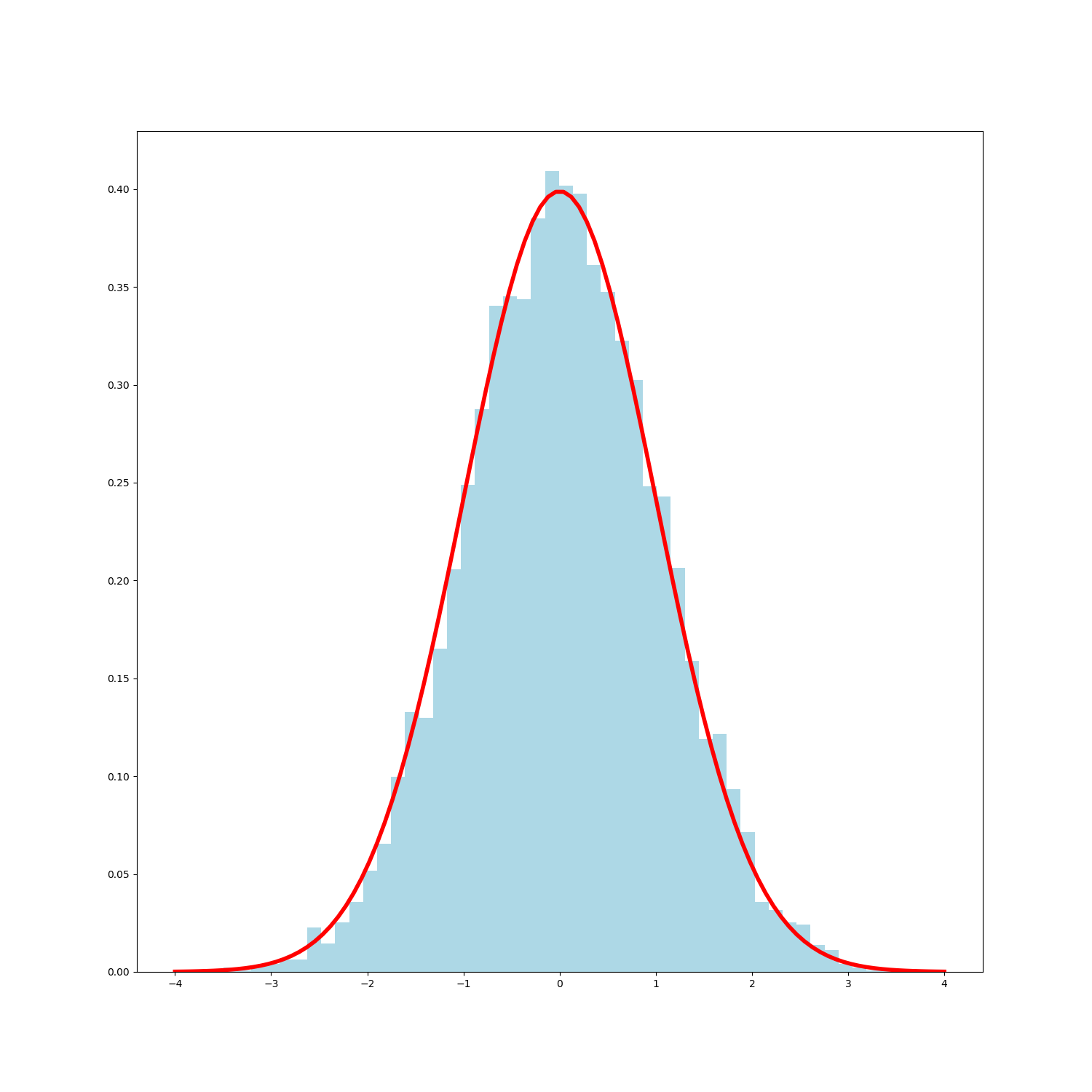}
    \includegraphics[width=0.4\textwidth]{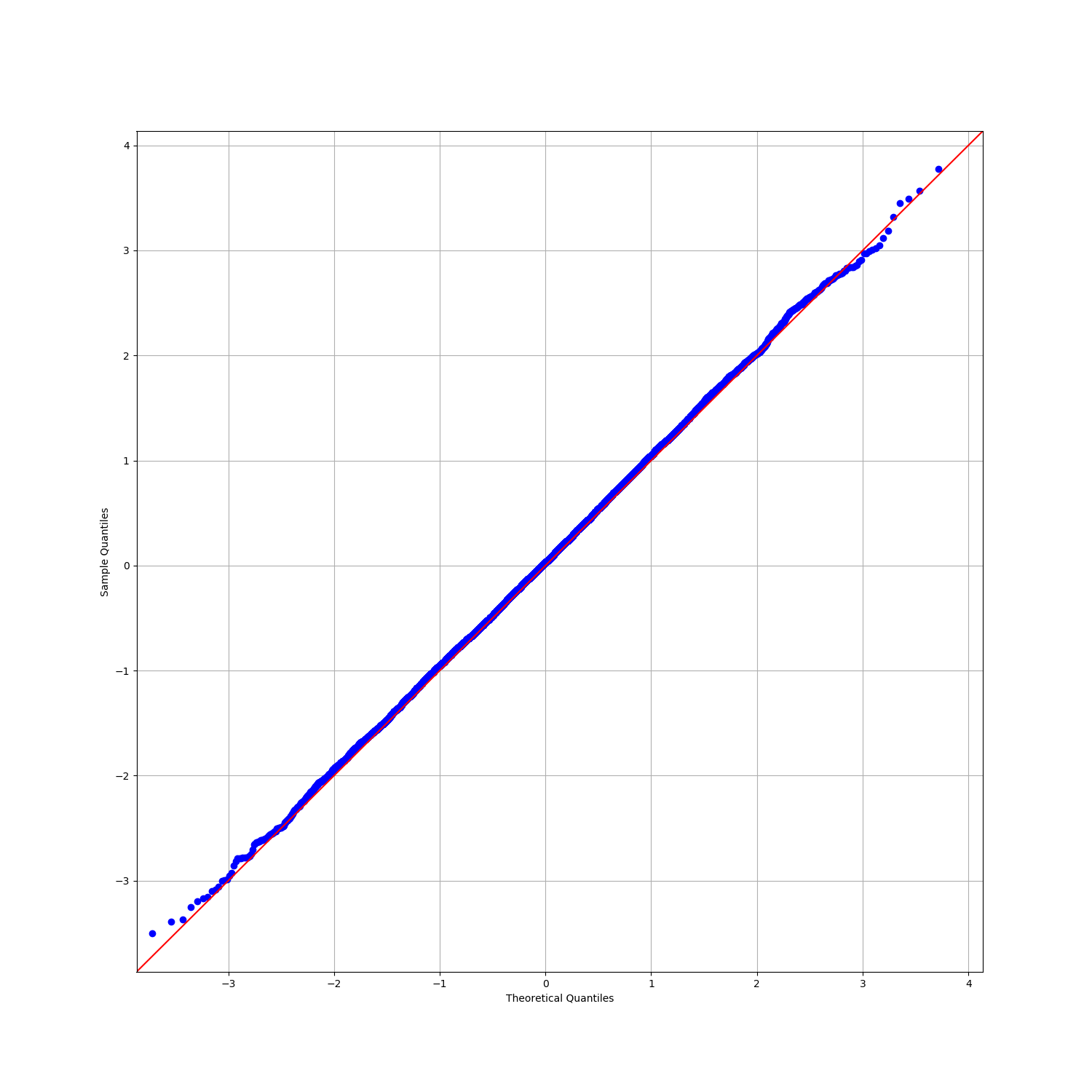}
    \caption{Illustration of Theorem~\ref{theo:semi_disc-p} using bootstrap procedures. Histograms  (left) and Q–Q plot (right) of the bootstrap estimation of $\sqrt{m}\frac{\left(\mathcal{W}_2^2(P,Q_m)-\mathcal{W}_2^2(P,Q)\right)}{\sigma_{2}(Q_m,\mathbf{z}^m)}$ .}
    \label{fig:CLTBootstrap}
\end{figure}
\\

Now we illustrate a case where the assumptions of Theorem~\ref{theo:semi_disc-p} are no longer fulfilled. More precisely, we consider $Q$ as the continuous probability with density
$\frac{1}{0.008\cdot 7}\sum_{i=1}^7 \mathbbm{1}_{\textbf{x}_i+(-0.1,0.1)^3}$, ---this is a mixture model of uniform probabilities on small cubes centered in the points of $\mathbb{X}$---we can see a 3d plot in Figure~\ref{fig:3dvisu}. To approximate the limit distribution we need first to estimate the value $\mathcal{W}_2^2(P,Q)$. We make it by an independent sample of size $10000$ and computing the mean by Monte Carlo $100$ times. Then we compute the histogram of $\sqrt{m}\frac{\left(\mathcal{W}_2^2(P,Q_m)-\mathcal{W}_2^2(P,Q)\right)}{\sigma_{2}(Q_m,\mathbf{z}^m)} $ with the original sample. The results are shown in Figure~\ref{fig:Monte}, we can see, clearly,  that  the limit is not Gaussian. Similar examples with non-Gaussian limits can be found in Figure~1 in \cite{Sommerfeld2018}. But Figure~\ref{fig:Monte} is quite different from their experimentation since one of the probabilities is continuous and \cite{Sommerfeld2018} studies only the  optimal transport problem between discrete probabilities.
\begin{figure}[h!]
   \centering
    \includegraphics[width=0.5\textwidth]{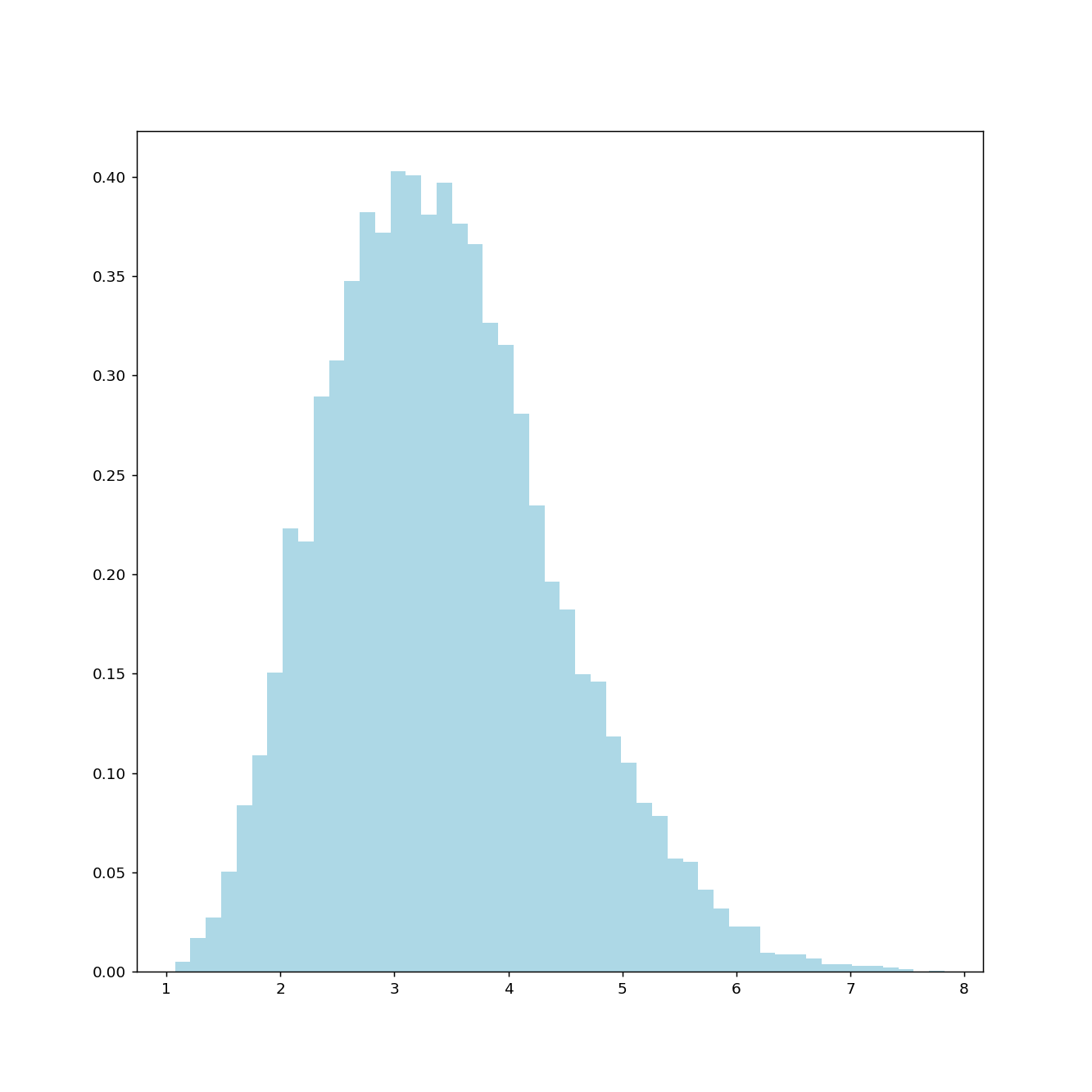}
    
    \caption{Illustration of Theorem~\ref{Teoremaprinci}, by using Monte Carlo's method, for $Q$ with disconnected support.}
    \label{fig:Monte}
\end{figure}

\section{A central Limit theorem for the potentials.}
The aim of this section is to provide a CLT for the empirical potentials, defined as the solutions to the empirical version of  the dual formulation of the Monge-Kantorovich problem \eqref{dual}. In the semidiscrete case the potentials are pairs formed by $\mathbf{z}=(z_1,\dots,z_N)\in \operatorname{Opt}_c(P,Q)$ and  $
    \label{phi}\varphi(\mathbf{y}):=\inf_{i=1, \dots,N} \{ c(\mathbf{x}_i, \mathbf{y} ) -{z_i} \}.
$
Note that potentials are defined up to a constant in the sense that if $(\psi,\varphi)$ solves \eqref{dual} then $(\psi+C,\varphi-C)$ also  solves \eqref{dual}, for any constant $C$.  Hence we will  study  the properties of the following functional, defined in  $\langle \mathbf{1}\rangle^{\perp} $ which denotes the orthogonal complement of the vector space generated by $\mathbf{1}=(1,\dots, 1)$
\begin{align*}
    \mathcal{M}_{\mathbf{p}}:\langle \mathbf{1}\rangle^{\perp} &\longrightarrow \R \\
    \mathbf{z}&\mapsto  g_c(P,Q,\mathbf{z}),
\end{align*}
where $g_c(P,Q,\mathbf{z})$ is defined as in \eqref{defg}.

In this section we will use some framework developed in~\cite{JUN2019}.  So in  this section we propose some slight changes of the notations yet maintaining as much coherence as possible with  the previous ones.\\
\indent  First we will assume that $\mathcal{Y}$ is an open domain of a $d$-dimensional Riemannian manifold $\mathcal{R}$ endowed with the volume measure $\mathcal{V}$ and metric $d$.  We consider  $\mathcal{C}(\mathcal{Y})$, $\mathcal{C}^1(\mathcal{Y})$ and $\mathcal{C}^{1,1}(\mathcal{Y})$  the spaces of real valued continuous functions, real valued continuously differentiable functions  and  the space of real valued continuously differentiable functions with  Lipschitz derivatives, respectively. \\ \\
Following the approach in \cite{JUN2019}, we assume  that the cost satisfies the  following assumptions
\begin{align}
    \label{Reg}
    \tag{Reg}
    \text{$c(\mathbf{x}_i,\cdot)\in \mathcal{C}^{1,1}(\mathcal{Y})$, for all $i=1,\dots,N$,}
\end{align}
\begin{align}
        \label{Twist}
    \tag{Twist}
    \text{$D_{\mathbf{y}}c(\mathbf{x}_i,\mathbf{y}):\mathcal{Y}\rightarrow T^*_{\mathbf{y}}(\Omega)$ is injective as a function of $\mathbf{y}$, for all $i=1,\dots,N$},
\end{align}
where $D_{\mathbf{y}}c$ denotes the partial derivative of $c$ w.r.t. the second variable.
    For every $i\in \{1, \dots,N\}$ there exists $\Omega_i\subset \R^d$ open and convex set, and a $\mathcal{C}^{1,1}$ diffeomorphism $\exp^c_{i}:\Omega_i\rightarrow \Omega$ such that the functions
\begin{align}
    \label{QC}
    \tag{QC}
        \text{$\Omega_{i}\ni\mathbf{p}\mapsto f_{i,j}(\mathbf{p}):=c(\mathbf{x}_i,\exp^c_{i}\mathbf{p})-c(\mathbf{x}_j,\exp^c_{i}\mathbf{p})$  are quasi-convex for all $j=1,\dots,N$.}
\end{align}
Here quasi-convex, according to \cite{JUN2019}, means that for every $\lambda\in \R^d$ the sets $f_{i,j}^{-1}([-\infty, \lambda])$ are convex. \\
Besides the assumptions on the cost, we assume that  the probability is supported in a \emph{$c$-convex set} $\Omega$, which  means that $ (\exp^c_{i})^{-1}(\Omega)$ is convex, for every $i=1,\dots,N$. Formally,
let $\mathcal{Y}\subset \mathcal{R}$ be a compact $c$-convex set, $P\in \mathcal{P}(\mathbb{X})$ be as in \eqref{represen} and suppose that
  \begin{equation}
 \label{Hol}
     \tag{Cont}
     \text{$Q\in \mathcal{P}(\mathcal{Y})$ satisfies $Q\ll\mathcal{V}$ with density $q\in \mathcal{C}(\mathcal{Y})$}.
 \end{equation}
The last required assumption in  \cite{JUN2019} is that $Q$ satisfies a \emph{Poincar\'e-Wirtinger inequality with constant} $C_{PW}$:  a probability measure $Q$ supported in a compact set $\mathcal{Y}\subset\mathcal{R}$ satisfies a Poincar\'e-Wirtinger inequality with constant $C_{PW}$ if for every $f\in \mathcal{C}^1(\mathcal{Y})$ we have that for $Y\sim Q$
 \begin{equation}
 \label{PW}
     \tag{PW}
     E(|f(Y)-E(f(Y))|)\leq C_{PW} E(|\nabla f(Y)|).
 \end{equation} 
  In order to clarify the feasibility of such assumptions, we will provide some insights on them at the end of the section.
 \cite{JUN2019} proved the following assertions.  
 \begin{enumerate}
    \item \label{item_Lemma.1} Under assumptions \eqref{Reg} and \eqref{Twist} the function $\mathcal{M}_{\mathbf{p}}(\mathbf{z})$ is concave and differentiable with derivative
 $$\nabla_{\mathbf{z}} \mathcal{M}_{\mathbf{p}}(z) = (-Q(A_2(\mathbf{z}))+p_2, \dots, -Q(A_N(\mathbf{z}))+p_N ),$$
 where 
\begin{align}\label{A_k}
A_k(\mathbf{z}):=\{ \mathbf{y} \in \R^d :\ \ c(\mathbf{x}_k,\mathbf{y} ) -z_k <c(\mathbf{x}_i,\mathbf{y} )-z_i, \ \ \text{for all $i\neq k$} \}.
\end{align}
\item \label{item_Lemma.3} Under assumptions \eqref{Reg},\eqref{Twist} and \eqref{QC}, the function $\mathcal{M}_{\mathbf{p}}$ is twice continuously differentiable with    Hessian matrix $D_{\mathbf{z}}^2 \mathcal{M}_{\mathbf{p}}(\mathbf{z})=\left(\frac{\partial^2 \mathcal{M}_{\mathbf{p}}}{\partial z_i\partial z_j}(\mathbf{z})\right)_{i,j=1,\dots,N}$  and partial derivatives 
\begin{align}
\begin{split}\label{Hessian}
 \frac{\partial^2 }{\partial z_i\partial z_j}  \mathcal{M}_{\mathbf{p}}(\mathbf{z}))&=\int_{A_k(\mathbf{z})\cap A_k(\mathbf{z}) } \frac{1}{|\nabla_{\mathbf{y}} c(\mathbf{x}_i,\mathbf{y})-\nabla_{\mathbf{y}} c(\mathbf{x}_j,\mathbf{y})|}dQ(\mathbf{y}), \ \ \text{if  $i\neq j$,}\\
\frac{\partial^2}{\partial^2 z_i} \mathcal{M}_{\mathbf{p}}(\mathbf{z}) &=-\sum_{j\neq i}\frac{\partial^2 }{\partial z_i\partial z_j} \mathcal{M}_{\mathbf{p}}(\mathbf{z}).  
\end{split}
\end{align}
\item Under assumptions \eqref{Reg},\eqref{Twist} and \eqref{QC}, and if $Q$ satisfies \eqref{PW}, then there exists a constant $C$ such that 
\begin{equation}\label{estrict_epsilon}
    \text{$\langle D^2_{\mathbf{z}}  \mathcal{M}_{\mathbf{p}}(\mathbf{z}) \mathbf{v}, \mathbf{v} \rangle \leq -C \epsilon ^3 |\mathbf{v}|^2$, for all $\mathbf{z}\in \mathcal{K}^{\epsilon}$ and $\mathbf{v}\in \langle \mathbf{1}\rangle^{\perp}$,}
\end{equation}
where 
$$ \mathcal{K}^{\epsilon}:=\{\mathbf{z}\in \R^d: \ Q(A_i(\mathbf{z}))>\epsilon, \ \text{ for all $i=1, \dots,N$.} \}.$$
\end{enumerate}

These previous results imply immediately the following Lemma.
\begin{Lemma}\label{Lemma_derivatve2} 
Let $\mathcal{Y}\subset \mathcal{R}$ be a compact $c$-convex set, $P\in \mathcal{P}(\mathbb{X})$ and   $Q\in \mathcal{P}(\mathcal{Y})$ . Under assumptions \eqref{Reg}, \eqref{Twist} and \eqref{QC} on the cost $c$ and \eqref{PW} and \eqref{Hol} on $Q$, we have that the function $\mathcal{M}_{\mathbf{p}}$ is strictly concave and twice continuously differentiable, with 
\begin{align*}
 \nabla \mathcal{M}_p(\mathbf{z})&=\nabla_{\mathbf{z}} \mathcal{M}_{\mathbf{p}}(\mathbf{z}) |_{\langle \mathbf{1}\rangle^{\perp}}, \\
  D^2 \mathcal{M}_p(\mathbf{z})&=D^2_{\mathbf{z}} \mathcal{M}_{\mathbf{p}}(\mathbf{z}) |_{\langle \mathbf{1}\rangle^{\perp}}.
\end{align*}
Moreover if $\bar{\mathbf{z}}\in \langle \mathbf{1}\rangle^{\perp}\cap  \operatorname{Opt}_c(P,Q)$, then there exists a constant $C$ such that 
\begin{equation}\label{Lemma_estrict}
    \text{$\langle D^2\mathcal{M}_{\mathbf{p}}(\bar{\mathbf{z}}) \mathbf{v}, \mathbf{v} \rangle \leq -C \inf_{i=1, \dots,N}|p_i| ^3 |\mathbf{v}|^2$, for all $\mathbf{v}\in \langle \mathbf{1}\rangle^{\perp}$.}
\end{equation}
\end{Lemma}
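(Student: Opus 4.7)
My plan is to deduce the lemma as a corollary of items (\ref{item_Lemma.1})--(\ref{estrict_epsilon}) imported from \cite{JUN2019}, by exploiting the translation invariance of $\mathcal{M}_{\mathbf{p}}$ along the direction $\mathbf{1}$. The three steps are: first, transfer the $C^2$-regularity and derivative formulas from the ambient $\mathbb{R}^N$ setting to the intrinsic subspace $\langle\mathbf{1}\rangle^{\perp}$; second, inherit concavity and strict concavity on that subspace; third, use the first-order optimality condition at $\bar{\mathbf{z}}$ to show $\bar{\mathbf{z}}\in\mathcal{K}^{\inf_i p_i}$, so that the uniform Hessian bound (\ref{estrict_epsilon}) applies at $\bar{\mathbf{z}}$ and yields (\ref{Lemma_estrict}) essentially verbatim.

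The first two steps rest on the identity $\mathcal{M}_{\mathbf{p}}(\mathbf{z} + c\mathbf{1}) = \mathcal{M}_{\mathbf{p}}(\mathbf{z})$, which follows immediately from (\ref{defg}) and $\sum_i p_i = 1$. Differentiating it once along $\mathbf{1}$ shows $\nabla_{\mathbf{z}}\mathcal{M}_{\mathbf{p}}(\mathbf{z})\perp\mathbf{1}$ at every $\mathbf{z}$, and differentiating twice shows $\mathbf{1}\in\ker D^2_{\mathbf{z}}\mathcal{M}_{\mathbf{p}}(\mathbf{z})$. Consequently, the intrinsic gradient and Hessian of $\mathcal{M}_{\mathbf{p}}|_{\langle\mathbf{1}\rangle^{\perp}}$ coincide with the restrictions of $\nabla_{\mathbf{z}}\mathcal{M}_{\mathbf{p}}$ and $D^2_{\mathbf{z}}\mathcal{M}_{\mathbf{p}}$, giving the two stated formulas. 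Twice continuous differentiability is then item (\ref{item_Lemma.3}), concavity is item (\ref{item_Lemma.1}), and strict concavity on $\langle\mathbf{1}\rangle^{\perp}$ follows because (\ref{estrict_epsilon}) rules out any null direction of the Hessian inside $\langle\mathbf{1}\rangle^{\perp}$ whenever $\mathbf{z}$ belongs to some $\mathcal{K}^{\epsilon}$.

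The quantitative bound at $\bar{\mathbf{z}}\in\operatorname{Opt}_c(P,Q)$ is the one place that requires a short argument. By concavity and optimality on $\langle\mathbf{1}\rangle^{\perp}$, the first-order condition $\nabla\mathcal{M}_p(\bar{\mathbf{z}}) = 0$ holds; combined with the formula from item (\ref{item_Lemma.1}) and the equality $\sum_k p_k = \sum_k Q(A_k(\bar{\mathbf{z}})) = 1$, this forces $Q(A_k(\bar{\mathbf{z}})) = p_k$ for every $k=1,\dots,N$. Hence $Q(A_k(\bar{\mathbf{z}}))\geq\min_i p_i > 0$ for all $k$, so $\bar{\mathbf{z}}\in\mathcal{K}^{\min_i p_i}$, and applying (\ref{estrict_epsilon}) with $\epsilon = \min_i p_i$ gives exactly (\ref{Lemma_estrict}). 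I do not foresee a genuine obstacle here: the heavy analytic lifting—$C^2$-regularity of $\mathcal{M}_{\mathbf{p}}$ under (\ref{Reg}), (\ref{Twist}), (\ref{QC}) and uniform quantitative concavity on each $\mathcal{K}^{\epsilon}$ under (\ref{PW})—is already supplied by \cite{JUN2019}, and what remains is bookkeeping between the ambient parameterization in $\mathbb{R}^N$ and the intrinsic viewpoint on the quotient by $\langle\mathbf{1}\rangle$, together with a clean reading of the first-order condition in that quotient.
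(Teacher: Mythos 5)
Your proposal is correct and follows essentially the same route as the paper: the paper's own proof simply observes that everything except the quantitative bound is immediate from the imported items, then uses $Q(A_k(\bar{\mathbf{z}}))=p_k$ to place $\bar{\mathbf{z}}$ in $\mathcal{K}^{\epsilon}$ with $\epsilon=\inf_i p_i$ and invokes \eqref{estrict_epsilon}. You supply a cleaner justification via the translation invariance $\mathcal{M}_{\mathbf{p}}(\mathbf{z}+c\mathbf{1})=\mathcal{M}_{\mathbf{p}}(\mathbf{z})$ and the first-order condition, which the paper treats as obvious; this is good bookkeeping but not a different argument.
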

\begin{proof}
Note that it only remains to prove that \eqref{Lemma_estrict} holds.  But this is a direct consequence of \eqref{estrict_epsilon}. In fact, since $\bar{\mathbf{z}}$ is the unique $\mathbf{z}\in \operatorname{Opt}(P,Q)$, then $Q(A_k(\bar{\mathbf{z}}))=p_k$ for $k=1,\dots,N$ and we can conclude.
\end{proof}
Now we can formulate the main theorem of this section which yields a central limit theorem for the empirical estimation of the potentials. We follow classical arguments of $M$-estimation by writing the function $\mathcal{M}_{\mathbf{p}}(\mathbf{z})$ as $ E(g(X, \mathbf{z}))$ with $X\sim P$ and $g:\mathbb{X}\times \langle \mathbf{1}\rangle^{\perp}\rightarrow \R$ defined by 
\begin{align}\label{g}
        g(\mathbf{x}_k, \mathbf{z})&:= z_k+\int \inf_{i=1, \dots,N} \{ c(\mathbf{x}_i, \mathbf{y} ) -z_i \}dQ(\mathbf{y}),
\end{align}
for each $\mathbf{z}=(z_1,\dots,z_N)$. The weak limit is a centered multivariate Gaussian distribution with covariance matrix, depending on the optimal $\tilde{\mathbf{z}}$, defined by the map 
\begin{equation}\label{eq:sigmaZ}
\Sigma(\tilde{\mathbf{z}})=
\left(D^2\mathcal{M}_{\mathbf{p}}(\tilde{\mathbf{z}} )\right)^{-1}A
    \left(D^2\mathcal{M}_{\mathbf{p}}(\tilde{\mathbf{z}} )\right)^{-1},\ \text{
where $A=\sum_{i=1}^N p_i \nabla_{\tilde{\mathbf{z}}}g(\mathbf{x}_i, \mathbf{z}) \nabla_{\tilde{\mathbf{z}}}g(\mathbf{x}_i, \mathbf{z})^{t}.$}
\end{equation}

\begin{Theorem}\label{Theo:potential}
Let $\mathcal{Y}\subset \mathcal{R}$ be a compact $c$-convex set, $P\in \mathcal{P}(\mathbb{X})$ and $Q\in \mathcal{P}(\mathcal{Y})$. Under Assumptions \eqref{Reg}, \eqref{Twist} and \eqref{QC} on the cost $c$, and \eqref{PW} and \eqref{Hol} on $Q$, we have that
\begin{equation}
    \sqrt{n}\left(\hat{\mathbf{z}}_n-\tilde{\mathbf{z}}\right)\stackrel{w}\longrightarrow N(\mathbf{0},\Sigma(\tilde{\mathbf{z}})),
\end{equation}
where $\tilde{\mathbf{z}}\in \langle \mathbf{1}\rangle^{\perp}\cap \operatorname{Opt}_c(P,Q)$ (resp. $\hat{\mathbf{z}}_n\in \langle \mathbf{1}\rangle^{\perp}\cap \operatorname{Opt}_c(P_n,Q)$), and 
$\Sigma(\tilde{\mathbf{z}})$ is defined in \eqref{eq:sigmaZ}.
\end{Theorem}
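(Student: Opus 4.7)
The plan is to treat Theorem~\ref{Theo:potential} as a classical M-estimation statement for the strictly concave, $\mathcal{C}^{2}$ functional $\mathcal{M}_{\mathbf{p}}$ on $\langle \mathbf{1}\rangle^{\perp}$. A useful structural observation is that the dependence of $g(\mathbf{x}_k,\mathbf{z})$ on $\mathbf{x}_k$ enters only through the linear term $z_k$, so that
\begin{equation*}
\mathcal{M}_{\mathbf{p}_n}(\mathbf{z})-\mathcal{M}_{\mathbf{p}}(\mathbf{z})=\langle \mathbf{p}_n-\mathbf{p},\mathbf{z}\rangle.
\end{equation*}
In particular the Hessian does not depend on $n$, $D^{2}\mathcal{M}_{\mathbf{p}_n}\equiv D^{2}\mathcal{M}_{\mathbf{p}}$ everywhere, and all the randomness in $\nabla \mathcal{M}_{\mathbf{p}_n}$ comes from the multinomial fluctuation $\mathbf{p}_n-\mathbf{p}$.

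First I would establish consistency $\hat{\mathbf{z}}_n\to\tilde{\mathbf{z}}$ almost surely. Lemma~\ref{Lemma:dualsemi}, transported to the chart $\langle \mathbf{1}\rangle^{\perp}$, bounds the empirical maximizer by $K_n^{*}=\bigl(\inf_i p_i^n\bigr)^{-1}\sup_i\!\int c(\mathbf{x}_i,\mathbf{y})\,dQ(\mathbf{y})$, which by the strong law of large numbers is almost surely uniformly bounded for large $n$. Uniform convergence $\mathcal{M}_{\mathbf{p}_n}\to\mathcal{M}_{\mathbf{p}}$ on any compact subset of $\langle \mathbf{1}\rangle^{\perp}$ is immediate from the linear identity above together with $\mathbf{p}_n\to\mathbf{p}$ a.s., while the strict concavity of $\mathcal{M}_{\mathbf{p}}$ supplied by~\eqref{Lemma_estrict} yields well-separation of the unique maximum at $\tilde{\mathbf{z}}$. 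The standard argmax-consistency argument then applies.

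Since $\tilde{\mathbf{z}}$ and $\hat{\mathbf{z}}_n$ are interior maximizers of $\mathcal{C}^{2}$ concave functions on $\langle \mathbf{1}\rangle^{\perp}$, the first order conditions $\nabla\mathcal{M}_{\mathbf{p}}(\tilde{\mathbf{z}})=0$ and $\nabla\mathcal{M}_{\mathbf{p}_n}(\hat{\mathbf{z}}_n)=0$ hold. A Taylor expansion of $\nabla\mathcal{M}_{\mathbf{p}_n}$ around $\tilde{\mathbf{z}}$ gives, for some intermediate $\mathbf{z}_n^{*}$ on the segment joining $\hat{\mathbf{z}}_n$ to $\tilde{\mathbf{z}}$,
\begin{equation*}
0=\nabla\mathcal{M}_{\mathbf{p}_n}(\tilde{\mathbf{z}})+D^{2}\mathcal{M}_{\mathbf{p}}(\mathbf{z}_n^{*})(\hat{\mathbf{z}}_n-\tilde{\mathbf{z}}),
\end{equation*}
and, using $\nabla\mathcal{M}_{\mathbf{p}_n}(\tilde{\mathbf{z}})-\nabla\mathcal{M}_{\mathbf{p}}(\tilde{\mathbf{z}})=(\mathbf{p}_n-\mathbf{p})|_{\langle\mathbf{1}\rangle^{\perp}}$, we obtain
\begin{equation*}
\sqrt{n}\,(\hat{\mathbf{z}}_n-\tilde{\mathbf{z}})=-\bigl(D^{2}\mathcal{M}_{\mathbf{p}}(\mathbf{z}_n^{*})\bigr)^{-1}\sqrt{n}\,(\mathbf{p}_n-\mathbf{p})|_{\langle\mathbf{1}\rangle^{\perp}}.
\end{equation*}
By continuity of the Hessian and consistency, $D^{2}\mathcal{M}_{\mathbf{p}}(\mathbf{z}_n^{*})\to D^{2}\mathcal{M}_{\mathbf{p}}(\tilde{\mathbf{z}})$, invertible on $\langle\mathbf{1}\rangle^{\perp}$ by~\eqref{Lemma_estrict}; the multinomial CLT gives $\sqrt{n}(\mathbf{p}_n-\mathbf{p})\xrightarrow{w}\mathcal{N}(\mathbf{0},\Sigma(\mathbf{p}))$, and Slutsky's lemma delivers the Gaussian limit. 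To match~\eqref{eq:sigmaZ}, it then suffices to observe that $\nabla_{\mathbf{z}}g(\mathbf{x}_k,\tilde{\mathbf{z}})=\mathbf{e}_k-\mathbf{p}$, which is immediate from the first order identity $Q(A_k(\tilde{\mathbf{z}}))=p_k$, whence the algebraic identity $A=\sum_k p_k(\mathbf{e}_k-\mathbf{p})(\mathbf{e}_k-\mathbf{p})^{t}=\mathrm{diag}(\mathbf{p})-\mathbf{p}\mathbf{p}^{t}=\Sigma(\mathbf{p})$ produces the stated covariance. The main obstacle I foresee is the consistency step: one has to verify that the bound of Lemma~\ref{Lemma:dualsemi} carries over to the parametrization on $\langle\mathbf{1}\rangle^{\perp}$, and that the quantitative strict concavity~\eqref{Lemma_estrict}, which assumes all cells $A_i(\mathbf{z})$ carry $Q$-mass at least $\epsilon$, is available in a neighborhood of $\tilde{\mathbf{z}}$ uniformly in $n$; once that is established, the rest is a textbook Taylor-plus-Slutsky computation.
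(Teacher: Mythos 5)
Your argument is correct and leads to the same conclusion as the paper, but you take a more explicit, self-contained route, whereas the paper simply verifies the hypotheses of Corollary~2.2 in the reference \cite{Huang2007CENTRALLT} (concavity of $g(\mathbf{x}_k,\cdot)$, uniqueness of $\tilde{\mathbf{z}}$, twice differentiability with negative definite Hessian, and a Lipschitz bound on $\nabla_{\mathbf{z}}g$) and then invokes that result as a black box. Your key structural observation — that $\mathcal{M}_{\mathbf{p}_n}(\mathbf{z})-\mathcal{M}_{\mathbf{p}}(\mathbf{z})=\langle \mathbf{p}_n-\mathbf{p},\mathbf{z}\rangle$, so the Hessian is deterministic and $n$-independent and all the randomness sits in a single linear term — is a genuine simplification that sidesteps the uniform-over-$n$ Hessian control usually needed in $M$-estimation; the consequent one-line identity $\sqrt{n}(\hat{\mathbf{z}}_n-\tilde{\mathbf{z}})=-\bigl(D^{2}\mathcal{M}_{\mathbf{p}}(\mathbf{z}_n^{*})\bigr)^{-1}\sqrt{n}(\mathbf{p}_n-\mathbf{p})|_{\langle\mathbf{1}\rangle^{\perp}}$ makes the sandwich structure transparent. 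Your verification that $\nabla_{\mathbf{z}}g(\mathbf{x}_k,\tilde{\mathbf{z}})=\mathbf{e}_k-\mathbf{p}$ (using $Q(A_k(\tilde{\mathbf{z}}))=p_k$, so the gradient lies in $\langle\mathbf{1}\rangle^{\perp}$) and the ensuing identity $A=\mathrm{diag}(\mathbf{p})-\mathbf{p}\mathbf{p}^{t}=\Sigma(\mathbf{p})$ is also correct and reconciles your direct formula with the paper's sandwich expression~\eqref{eq:sigmaZ}. Two minor technical points worth tidying if you write this up: the mean-value form of the Taylor expansion for a vector-valued gradient should be applied coordinate-wise (the intermediate point $\mathbf{z}_n^*$ may differ across components) or replaced by the integral form of the remainder; and for first-order conditions you need $\tilde{\mathbf{z}},\hat{\mathbf{z}}_n$ to be interior maximizers, which holds automatically here since the parameter space is the entire linear subspace $\langle\mathbf{1}\rangle^{\perp}$, the ball constraint from Lemma~\ref{Lemma:dualsemi} being only a device to show the maximizer exists and is eventually bounded.
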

\begin{proof}
Now let $g:\mathbb{X}\times \langle \mathbf{1}\rangle^{\perp}\rightarrow \R$ be defined in \eqref{g}. It satisfies that if $X\sim P$ then $ E(g(X, \mathbf{z}))=\mathcal{M}_{\mathbf{p}}(\mathbf{z})$.
Lemma~\ref{Lemma_derivatve2} implies in particular that:
\begin{enumerate}[(i)]
    \item the function $\mathbf{z}\mapsto g(\mathbf{x}_k, \mathbf{z})$ is concave for every $\mathbf{x}_k$ .
    \item There exists a unique $$\tilde{\mathbf{z}}\in \arg\sup_{
    \mathbf{z}\in \langle \mathbf{1}\rangle^{\perp}} E(g(X, \mathbf{z})).$$
    \item The empirical potential is defined as $$\hat{\mathbf{z}}_n\in \arg\sup_{
    \mathbf{z}\in \langle \mathbf{1}\rangle^{\perp}} \int g(\mathbf{x}, \mathbf{z})dP_n(\mathbf{x}).$$
     \item The function $\mathbf{z}\mapsto E(g(X, \mathbf{z}))$ is twice differentiable in $\tilde{\mathbf{z}}$ with strictly negative definite Hessian matrix  $D^2\mathcal{M}_{\mathbf{p}}(\tilde{\mathbf{z}} )$.
     \item For every $\mathbf{z}\in \R^{m-1}$ we have that  $$|\nabla_{\mathbf{z}}g(\mathbf{x}_k, \mathbf{z})|\leq |(Q(A_2(\mathbf{z}))+1, \dots, Q(A_N(\mathbf{z}))+1 )|\leq \sqrt{2(m-1)}.$$
     
\end{enumerate}
Then all the assumptions of Corollary 2.2 in \cite{Huang2007CENTRALLT} are satisfied by the function $g$. As a consequence, we have the limit
\begin{equation}
    \sqrt{n}\left(\hat{\mathbf{z}}_n-\tilde{\mathbf{z}}\right)\xrightarrow[]{w} N\left(\mathbf{0},\left(D^2\mathcal{M}_{\mathbf{p}}(\tilde{\mathbf{z}} )\right)^{-1}A
    \left(D^2\mathcal{M}_{\mathbf{p}}(\tilde{\mathbf{z}} )\right)^{-1}\right),
\end{equation}
where $A=E\left( \nabla_{\mathbf{z}}g(X, \mathbf{z}) \nabla_{\mathbf{z}}g(X, \mathbf{z})^{t}\right).$ Note that computing $A$ we obtain the expression  \eqref{eq:sigmaZ}.
\end{proof}
For $\tilde{\mathbf{z}}$ defined as in Theorem~\ref{Theo:potential}, set
\begin{equation}\label{c-conj}
   \varphi(\mathbf{y}):=\inf_{i=1, \dots,N} \{ c(\mathbf{x}_i, \mathbf{y} ) -\bar{z_i} \}
\end{equation}
and note that it is an optimal transport map from $Q$ to $P$, set also the value $i(y)\in \{ 1,\dots,N\}$ where the infumum of \eqref{c-conj} is attained. As before, we can define their empirical counterparts 
\begin{equation}\label{c-con_n}
   \varphi_n(\mathbf{y}):=\inf_{i=1, \dots,N} \{ c(\mathbf{x}_i, \mathbf{y} ) -\hat{z}_i \},
\end{equation}
which is an optimal transport map from $Q$ to $P_n$, and $i_{n}(y)$ the index where the infimum of \eqref{c-con_n} is attained. Then we have that 
\begin{align}\label{sandwich}
\sqrt{n} (\hat{z}_{i_n(y)}-\bar{z}_{i_n(y)}) \leq \sqrt{n}(\varphi_n(\mathbf{y})-\varphi(\mathbf{y}))\leq  \sqrt{n} (\hat{z}_{i(y)}-\bar{z}_{i(y)}).
\end{align}
We can take supremums over $\mathbf{y}$ in both sides of \eqref{sandwich} and derive that
\begin{align*}
  \sqrt{n}\sup_{i=1, \dots,N}( \hat{z}_{i}-\bar{z}_{i})=\sqrt{n}\sup_{\mathbf{y}\in \mathcal{Y}}(\varphi_n(\mathbf{y})-\varphi(\mathbf{y})).
\end{align*}
By symmetry we have that 
\begin{align*}
  \sqrt{n}\sup_{i=1, \dots,N}| \hat{z}_{i}-\bar{z}_{i}|=\sqrt{n}\sup_{\mathbf{y}\in \mathcal{Y}}|\varphi_n(\mathbf{y})-\varphi(\mathbf{y})|,
\end{align*}
which implies the following corollary.
\begin{Corollary}\label{Corollary_pot}
Under the hypothesis of Theorem~\ref{Theo:potential}, for $\varphi$ and $\varphi_n$ defined in \eqref{c-conj} and \eqref{c-con_n}, we have that 
$$
\sqrt{n}\sup_{\mathbf{y}\in \mathcal{Y}}|\varphi_n(\mathbf{y})-\varphi(\mathbf{y})|\stackrel{w}\longrightarrow \sup_{i=1,\dots,N} |N_i|,
$$
where $(N_1,\dots, N_N)\sim N(\mathbf{0},\Sigma(\tilde{\mathbf{z}})).$
\end{Corollary}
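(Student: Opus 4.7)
The plan is to treat the corollary as essentially a continuous--mapping theorem applied to Theorem~\ref{Theo:potential}, once the $L^\infty$-distance between $\varphi_n$ and $\varphi$ has been identified with the $\ell^\infty$-norm of the coordinate deviation $\hat{\mathbf{z}}_n-\tilde{\mathbf{z}}$. First I would make the pointwise sandwich that precedes the statement precise. Fix $\mathbf{y}\in\mathcal{Y}$ and let $i(\mathbf{y})$ and $i_n(\mathbf{y})$ denote indices attaining the infima in \eqref{c-conj} and \eqref{c-con_n}. Comparing $\varphi_n(\mathbf{y})=c(\mathbf{x}_{i_n(\mathbf{y})},\mathbf{y})-\hat{z}_{i_n(\mathbf{y})}$ with the trivial upper bound $\varphi(\mathbf{y})\leq c(\mathbf{x}_{i_n(\mathbf{y})},\mathbf{y})-\bar{z}_{i_n(\mathbf{y})}$ (and symmetrically with $i(\mathbf{y})$) yields, after taking absolute values,
\begin{equation*}
|\varphi_n(\mathbf{y})-\varphi(\mathbf{y})|\leq \max_{1\leq i\leq N}|\hat{z}_i-\bar{z}_i|,
\end{equation*}
for every $\mathbf{y}$, hence the same inequality holds for $\sup_{\mathbf{y}\in\mathcal{Y}}|\varphi_n-\varphi|$.

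Next I would verify the matching reverse inequality (asymptotically). By Lemma~\ref{Lemma_derivatve2} each Laguerre cell $A_i(\tilde{\mathbf{z}})$ has $Q$-mass $p_i>0$, so one can select $\mathbf{y}_i^\star\in A_i(\tilde{\mathbf{z}})$ where $i(\mathbf{y}_i^\star)=i$ and $\varphi(\mathbf{y}_i^\star)=c(\mathbf{x}_i,\mathbf{y}_i^\star)-\bar{z}_i$. The consistency $\hat{\mathbf{z}}_n\to\tilde{\mathbf{z}}$ in probability (which follows from Theorem~\ref{Theo:potential}) combined with the continuity of $c(\mathbf{x}_j,\cdot)$ forces $i_n(\mathbf{y}_i^\star)=i$ with probability tending to $1$, giving there $\varphi_n(\mathbf{y}_i^\star)-\varphi(\mathbf{y}_i^\star)=\bar{z}_i-\hat{z}_i$. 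Hence, with probability tending to one,
\begin{equation*}
\sup_{\mathbf{y}\in\mathcal{Y}}|\varphi_n(\mathbf{y})-\varphi(\mathbf{y})|=\max_{1\leq i\leq N}|\hat{z}_i-\bar{z}_i|,
\end{equation*}
and in particular the two quantities (scaled by $\sqrt{n}$) differ by $o_{\mathbb{P}}(1)$.

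The proof would then conclude by the continuous mapping theorem: the map $\mathbf{v}\mapsto\max_{i}|v_i|$ is continuous on $\R^N$, and Theorem~\ref{Theo:potential} delivers $\sqrt{n}(\hat{\mathbf{z}}_n-\tilde{\mathbf{z}})\stackrel{w}{\longrightarrow}N(\mathbf{0},\Sigma(\tilde{\mathbf{z}}))$, so that $\sqrt{n}\max_i|\hat{z}_i-\bar{z}_i|\stackrel{w}{\longrightarrow}\max_i|N_i|$, and Slutsky transfers the limit to $\sqrt{n}\sup_{\mathbf{y}}|\varphi_n-\varphi|$. The main obstacle, in my view, is the reverse inequality: the sandwich only delivers the upper bound directly, and recovering the equality requires exploiting the non-degeneracy of the Laguerre decomposition (positivity of each $p_i$, together with \eqref{Hol} and \eqref{PW}, so that the cells $A_i(\tilde{\mathbf{z}})$ have non-empty interior) and the consistency of $\hat{\mathbf{z}}_n$ to guarantee that the worst coordinate index is, in the limit, actually realised by some $\mathbf{y}\in\mathcal{Y}$.
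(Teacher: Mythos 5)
Your proof is correct and follows essentially the same route as the paper: both derive the pointwise sandwich between $\varphi_n-\varphi$ and the coordinate errors $\hat z_i-\bar z_i$, upgrade it to the identity $\sup_{\mathbf{y}}|\varphi_n(\mathbf{y})-\varphi(\mathbf{y})|=\max_i|\hat z_i-\bar z_i|$ using the non-degeneracy of the Laguerre cells ($Q(A_i(\tilde{\mathbf{z}}))=p_i>0$ forces each index to be hit), and then transfer the limit from Theorem~\ref{Theo:potential} by continuous mapping. The only cosmetic difference is that you pin the reverse inequality at explicit test points $\mathbf{y}_i^\star\in A_i(\tilde{\mathbf{z}})$ together with consistency of $\hat{\mathbf{z}}_n$, while the paper simply observes that both index maps $i(\cdot)$ and $i_n(\cdot)$ attain every value when the supremum over $\mathbf{y}$ is taken; both hinge on the same non-emptiness of the cells.
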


We will conclude by  some comments on the assumptions made in this section.
\begin{enumerate}
\item Under the hypotheses of Theorem~\ref{Theo:potential}, the optimal potential is unique once we fix its value at a given point. Then Corollary \ref{Corollary_pot} provides a uniform confidence band for this optimal potential, namely, 
\begin{equation*}
   \left[ \varphi_n(\mathbf{y})\pm  \frac{\Delta_{\alpha}}{\sqrt{n}} \right]_{\mathbf{y}\in \mathcal{Y}},
\end{equation*}
where $\Delta_{\alpha}$ is  the $1-\alpha$ quantile of the limit distribution.
    \item Note that if we consider $\mathcal{R}=\R^d$ and the quadratic cost, then \eqref{Reg}, \eqref{Twist} and \eqref{QC} are obviously satisfied, by taking the function $\exp_j$ as the identity. Actually the map  $\mathbf{y}\mapsto|\mathbf{x}_j-\mathbf{y} |^2$ is $\mathcal{C}^{\infty}(\R^d)$ and $\mathbf{y}-\mathbf{x}_j$ is its derivative w.r.t. $\mathbf{y}$. Finally note that the function
    $$ \R^d\ni\mathbf{p}\mapsto |\mathbf{x}_i-\mathbf{p}|^2-|\mathbf{x}_j-\mathbf{p}|^2=|\mathbf{x}_i|^2-|\mathbf{x}_j|^2+\langle\mathbf{x}_j-\mathbf{x}_i,\mathbf{p}\rangle$$
    is linear in $\mathbf{p}$ and consequently quasi-convex.
\item Assumption~\eqref{PW} on the probability $Q$ has been widely studied in the literature for its implications in PDEs, see \cite{Acosta}. They proved that \eqref{PW} holds for a uniform distribution on a convex set $\Omega$. In \cite{Rathmair}, Lemma 1 claims that \eqref{PW} is equivalent to the bound of $\inf_{t\in \R} E(|f(Y)-t|)$, for every $f\in \mathcal{C}^1(\mathcal{Y})$. 
Let $Y\sim Q$ be such that there exists a $\mathcal{C}^1(\mathcal{Y})$ map $T$ satisfying the relation $T(U)=Y$, where $U$ follows a uniform distribution on a compact convex set $A$. Since $f\circ T\in \mathcal{C}^1(A)$,   by the powerful result of \cite{Acosta}, there exists $C_A>0$ such that 
\begin{align*}
       \inf_{t\in \R} E(|f(Y)-t|)&= \inf_{t\in \R} E(|f(T(U))-t|)\leq C_{A} E(|\nabla f(T(U))|\cdot ||T'(U)||_2)\\
    &\leq C_{A}\sup_{\mathbf{u}\in A}||T'(\mathbf{u})||_2E(|\nabla f(T(U))|),
\end{align*}
where $|| T'(U)||_2$ denotes the matrix operator norm. We conclude that, in such cases, \eqref{PW} holds. Note that the existence of this map relies on the well known existence of continuously differentiable optimal transport maps, which is treated by Caffarelli's theory. We refer to the most recent work \cite{cordero2019regularity} and references therein. However, as pointed out in \cite{JUN2019},  more general probabilities can satisfy that assumption such as radial functions on $\R^d$ with density 
\begin{equation*}
    \frac{p(|\mathbf{x}|)}{| \mathbf{x}|^{d-1}}, \ \ \text{for $|\mathbf{x}|\leq R$,  with $p=0$ in $[0,r] $ and concave in $[r,R]$. }
\end{equation*}
 Moreover the spherical uniform $\mathbb{U}_d$, used in \cite{Hallin2020DistributionAQ} to generalize the distribution function to general dimension, where we first choose the radium uniformly and then, independently, we choose a point in the sphere $\mathbf{S}_{d-1}$, also satisfies \eqref{PW}. This can be proved by using previous argument with the function $T(\mathbf{x})=\mathbf{x}| \mathbf{x}|^{d-1}$, which is continuously differentiable. But note that this probability measure does not satisfy \eqref{Hol}. We conjecture that Theorem \ref{Theo:potential} still holds in this case, but  some additional work should be done which is left as a future work. In the same way, the regularity of the transport can be obtained in the continuous case by a careful treatment of the Monge-Amp\'ere equation, see \cite{DELBARRIO2020104671}.
 \begin{figure}[h!]
    \centering
    \includegraphics[width=0.3\textwidth]{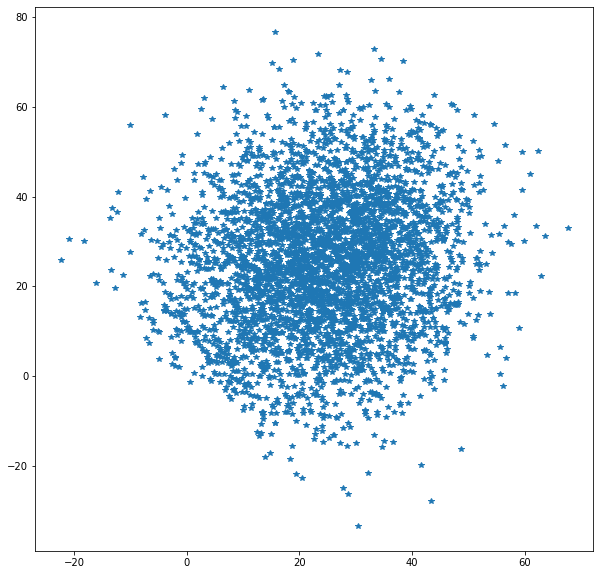}\ \ \ \ \ \ \ \ \ \ \ \ \ \ \ \ $ $ \ \ \ \
    \includegraphics[width=0.33\textwidth]{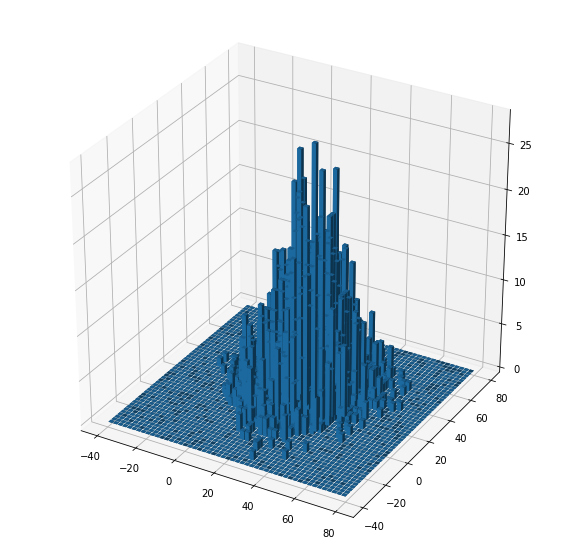}
    \caption{Bootstrap approximation of $N(\mathbf{0},\Sigma(\tilde{\mathbf{z}}))$ . Here $P$ is supported in tree points $P=\frac{1}{3}\left(\mathbf{x}_1+  \mathbf{x}_2+ \mathbf{x}_3\right)$, and $Q$  is uniform on $(0,1)^6.$ To approximate the uniform, we sample $50000$ i.i.d. points. Then we compute the empirical potentials $\hat{\mathbf{z}}$ for a sample of $ 4000 $ points  and the Bootstrap potentials $\hat{\mathbf{z}}^s$, for $s=1, \dots, 4000$.  Both---the empirical and the bootstrap---are projected to the space $\langle\mathbf{1}\rangle^{\perp}$. Since the space  $\langle\mathbf{1}\rangle^{\perp}$ is, in this case, $2-$dimensional, we can plot the $2D$ distribution of   $(\hat{\mathbf{z}}- \hat{\mathbf{z}}^s)\sqrt{4000}$  (left), and its $3D$ histogram (right).}
    \label{fig:mesh2}
\end{figure}\\
 \item The limit distribution described in \ref{Theo:potential} is not easy to derive, even knowing the exact probabilities $P$ and $Q$. But note that the limits are consequence of Corollary 2.2 in \cite{Huang2007CENTRALLT}, which used, in fact, a delta-method, for  differentiable functions in the classic sense. Hence a  bootstrap approximation can be used to approximate the limit distribution. The approximation will be consistent as  in~\cite{Santos}. In Figure~\ref{fig:mesh2} we compute such an approximation by using bootstrap where $P$ is supported on three  points in $\R^6$ and $Q$ is the uniform on $(0,1)^6$.

\end{enumerate}
\section*{Acknowledgements}
The authors would like to thank  Luis-Alberto Rodríguez  for showing us the paper \cite{Luis2020},  which is key for the proof of Theorem~\ref{Teoremaprinci}. The research of Eustasio del Barrio is partially supported by FEDER, Spanish Ministerio de Econom\'ia y Competitividad, grant MTM2017-86061-C2-1-P and Junta de Castilla y Le\'on, grants VA005P17 and VA002G18.
The research of Alberto Gonz\'alez-Sanz and Jean-Michel Loubes is partially supported by the AI Interdisciplinary Institute ANITI, which is funded by the French “Investing
for the Future – PIA3” program under the Grant agreement ANR-19-PI3A-0004.
\section{Appendix}

\subsection{Proofs of Lemmas}
\begin{proof}[Proof of Lemma \ref{Lemma:dualsemi}]
\mbox{}\\*
First, strong duality \eqref{dual} yields that 
\begin{align*}
\mathcal{T}_c(P,Q)=\sup_{(f,g)\in \Phi_c(P,Q)}\int f(\textbf{x}) dP(\textbf{x})+\int g(\textbf{y}) dQ(\textbf{y})= \sup_{(f,g)\in \Phi_c(P,Q)}\sum_{i=1}^N f(\mathbf{x_i})p_i+\int  g(\textbf{y})dQ(\mathbf{y}).
\end{align*}
Set $(z_1, \dots,z_N)=(f(\mathbf{x}_i), \dots, f(\mathbf{x}_N))$, then 
$
\mathcal{T}_c(P,Q)=\sup_{(\mathbf{z},g)}\sum_{i=1}^N z_ip_i+\int  g(\textbf{y})dQ(\mathbf{y}),
$
where the supremun is taken on the set $(\mathbf{z},g)$ such that $z_i+g(\textbf{y})\leq  c(\textbf{x}_i,\textbf{y})$ for all $i=1, \dots, N.$ Then  $g(\textbf{y})\leq  \inf_{i=1, \dots, N} \{ c(\mathbf{x}_i, \mathbf{y} ) -z_i \}$ and $\mathcal{T}_c(P,Q)=\sup_{\mathbf{z}\in \R^N} g_c(P,Q, \mathbf{z})$.\\

Let $\mathbf{z}^*=(z^*_1, \dots,z^*_N)\in\R^N$ be such that  $\mathcal{T}_c(P,Q)= g_c(P,Q, \mathbf{z}^*)$.
Denote as 
$ l=\arg\inf_i z_i^* $ and $ u=\arg\sup_i z_i^* $, which are different---otherwise the potentials are constant and we conclude that $K=0$. Therefore
\begin{align*}
 \mathcal{T}_c(P,Q)&\leq  \sum_{i=1}^N z^*_ip_i+\int  \{ c(\mathbf{y},\mathbf{x}_u) -z^*_u \}dQ(\mathbf{y})
 \\&\leq  (1-p_l)z^*_u+p_lz^*_l+\int  \{ c(\mathbf{y},\mathbf{x}_u) -z^*_u \}dQ(\mathbf{y})\\
& \leq - p_lz^*_u+p_lz^*_l+\int  c(\mathbf{y},\mathbf{x}_u) dQ(\mathbf{y})\\
& \leq  p_l(z^*_l-z^*_u)+\int  c(\mathbf{y},\mathbf{x}_u) dQ(\mathbf{y}),
\end{align*}
which implies $ p_l(z^*_u-z^*_l)\leq \int  c(\mathbf{y},\mathbf{x}_u) dQ(\mathbf{y})- \mathcal{T}_c(P,Q)$ and
\begin{align*}
 \sup_{i,j=1, \dots, N}|z^*_i-z^*_j |\leq \frac{1}{\inf_i p_i}\left( \sup_{i=1, \dots, N}\int  c(\mathbf{y},\mathbf{x}_i) dQ(\mathbf{y})-\mathcal{T}_c(P,Q) \right),
\end{align*}
since adding additive constant does not change $g_c(P,Q, \mathbf{z})$, then we conclude.
\end{proof}

\begin{Lemma}
\label{covering}
Under the assumptions of Theorem~\ref{Teoremaprinci},  the class   $ \mathcal{F}_c^K$ is $Q$-Donsker.
\end{Lemma}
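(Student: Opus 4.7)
The plan is to verify the hypotheses of the standard bracketing entropy CLT for Donsker classes (for instance Theorem~2.5.6 in \cite{Vart_Well}), exploiting the fact that $\mathcal{F}_c^K$ is a Lipschitz parametric family indexed by a compact subset of $\R^N$. Write a generic element of $\mathcal{F}_c^K$ as $f_{\mathbf{z}}(\mathbf{y}):=\inf_{i=1,\dots,N}\{c(\mathbf{x}_i,\mathbf{y})-z_i\}$, and let $T:=\{\mathbf{z}\in\R^N:\,z_1=0,\ |\mathbf{z}|\leq K\}$, which is a compact subset of an $(N-1)$-dimensional affine subspace of $\R^N$.

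First I would record, directly from Lemma~\ref{lemma8lipscit}, the uniform Lipschitz bound $|f_{\mathbf{z}}(\mathbf{y})-f_{\mathbf{s}}(\mathbf{y})|\leq |\mathbf{z}-\mathbf{s}|$, valid for all $\mathbf{y}\in\mathcal{Y}$ and $\mathbf{z},\mathbf{s}\in T$. Next I would exhibit a square-integrable envelope: since $c$ is non-negative and $|z_i|\leq K$, the function $F(\mathbf{y}):=\max_{i=1,\dots,N} c(\mathbf{x}_i,\mathbf{y})+K$ dominates $|f_{\mathbf{z}}|$ pointwise, and assumption~\eqref{cuadratic} guarantees $F\in L^2(Q)$.

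The third step is a volumetric covering: one can cover $T$ by at most $M(\epsilon)\leq (CK/\epsilon)^{N-1}$ Euclidean balls of radius $\epsilon$, with centres $\mathbf{z}^{(1)},\dots,\mathbf{z}^{(M(\epsilon))}$. Transporting this cover through the Lipschitz bound yields brackets $[f_{\mathbf{z}^{(k)}}-\epsilon,\,f_{\mathbf{z}^{(k)}}+\epsilon]$ of pointwise, hence $L^2(Q)$, length $2\epsilon$ that together cover $\mathcal{F}_c^K$. This gives the polynomial bracketing number bound $N_{[]}(2\epsilon,\mathcal{F}_c^K,L^2(Q))\leq (CK/\epsilon)^{N-1}$, whose square-root-logarithm is integrable in $\epsilon$ near $0$. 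Combined with the $L^2(Q)$ envelope $F$, the bracketing CLT in \cite{Vart_Well} delivers the desired Donsker conclusion.

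The only step that genuinely uses a hypothesis of Theorem~\ref{Teoremaprinci} is the envelope step, where the second moment assumption~\eqref{cuadratic} is needed to place $F$ in $L^2(Q)$; everything else is a routine consequence of the Lipschitz parametrization of $\mathcal{F}_c^K$ and the finite dimension of the parameter space $T$. I do not expect any serious obstacle: the only mild subtlety is that had one only assumed \eqref{cond}, the envelope would lie in $L^1(Q)$ but not necessarily $L^2(Q)$, which explains why the stronger moment assumption is imposed precisely when $\mathcal{F}_c^K$ enters the argument through the $Q_m$ fluctuations.
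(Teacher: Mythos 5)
Your proposal is correct and follows essentially the same route as the paper: a Lipschitz-in-parameter bracketing bound reducing $N_{[]}(2\epsilon,\mathcal{F}_c^K,L^2(Q))$ to a Euclidean covering number of the ball, the square-integrable envelope $F(\mathbf{y})=\sup_i c(\mathbf{x}_i,\mathbf{y})+K$ via \eqref{cuadratic}, and a bracketing Donsker theorem to conclude. The only differences are which standard reference is invoked for the entropy bound and the CLT, which is immaterial.
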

\begin{proof}[Proof of Lemma \ref{covering}]
\mbox{}\\*

We use bracketing numbers, see Definition~2.1.6  in \cite{Vart_Well}.  Lemma~\ref{lemma8lipscit} implies that 
$${N}_{[]}(2\epsilon, \mathcal{F}_c^K, ||\cdot ||_{L^2(Q)})\leq {N}(\epsilon, \mathbb{B}_K(\mathbf{0}), |\cdot |). $$
Therefore, Lemma 4.14 in \cite{Massart2007ConcentrationIA} implies that 
\begin{equation}
    \label{entropycondition}
    \int_{0}^{\infty} \sqrt{\log\left({N}_{[]}(\epsilon, \mathcal{F}_c^K, ||\cdot ||_{L^2(Q)})\right)}d\epsilon<\infty.
\end{equation}
The envelope function of the class $\mathcal{F}_c^K$ can be taken as the function $F$ defined as $F(\mathbf{y})=  \sup_{i=1, \dots, m} c(\mathbf{x}_i, \mathbf{y} )+K.$ Note that $$\int F(\mathbf{y})^2dQ(\mathbf{y})\leq 2K+2\int \sup_{i=1, \dots, m} c(\mathbf{x}_i, \mathbf{y} )^2dQ(\mathbf{y})<\infty.$$ 
Using Theorem 3.7.38  in \cite{Gin2015MathematicalFO} we obtain the desired result.
\end{proof}

\bibliographystyle{plain}  
\bibliography{references}

\end{document}